\newcommand{\D}{\mathbb D}
\newcommand{\DD}{\mathcal D}
\newcommand{\HH}{\mathcal H}
\newcommand{\JJ}{\mathcal J}
\newcommand{\Lcl}{\mathbb L}
\newcommand{\LL}{\mathcal L}
\newcommand{\mat}{{\rm Mat}}
\newcommand{\Rcl}{\mathbb R}
\newcommand{\RR}{\mathcal R}
\newcommand{\spn}{\text{span}}
\newcommand{\supp}{\text{supp}\,}
\newcommand{\Z}{\mathbb Z}
\newcounter{tmp}
\newcounter{assum}
\newcommand{\assump}[1]{
\setcounter{tmp}{\value{thm}}
\setcounter{thm}{\value{assum}}
\begin{assum}#1\end{assum}
\setcounter{thm}{\value{tmp}}
\stepcounter{assum}
}
\begin{document}

\begin{frontmatter}
\title{Cellularity of Diagram Algebras as Twisted Semigroup Algebras}
\author{Stewart Wilcox\thanksref{label1}}
\thanks[label1]{This work was completed while the author was a postgraduate in the Department of Mathematics 
and Statistics at the University of Sydney}
\ead{stewartw@math.harvard.edu}
\address{Harvard University,\\
               Department of Mathematics,\\
	       1 Oxford Street,
	       Cambridge, MA 02138,\\
	       U.S.A.}

\begin{abstract}
The Temperley-Lieb and Brauer algebras and their cyclotomic analogues, as well as the partition algebra, are all examples of twisted 
semigroup algebras. We prove a general theorem about the cellularity of twisted semigroup algebras of regular semigroups. This 
theorem, which generalises a recent result of East about semigroup algebras of inverse semigroups, allows us to easily reproduce 
the cellularity of these algebras.
\end{abstract}

\begin{keyword}
cellular \sep twisted semigroup algebra
\end{keyword}
\end{frontmatter}
\renewcommand{\theenumi}{\roman{enumi}}
\section{Introduction}
There has been much interest in algebras which have a basis consisting of diagrams, which are multiplied in some natural 
diagrammatic way. Examples of these so-called \emph{diagram algebras} include the partition, Brauer and Temperley-Lieb algebras. 
These three examples have been studied extensively in the literature. In particular each has been shown to be \emph{cellular}; this 
property, introduced by Graham and Lehrer in \cite{gus}, allows us to easily derive information about the semisimplicity of the 
algebra and about its representation theory, even in the non-semisimple case.

In the three algebras mentioned above, the product of two diagram basis elements is always a scalar multiple of another basis 
element. Motivated by this observation, we realise these algebras as \emph{twisted semigroup algebras}. We can then reproduce the 
above cellularity results by proving a general theorem about twisted semigroup algebras, which extends a recent result of East 
\cite{james}.
\section{Semigroups}
Central to the study of any semigroup are certain relations defined by Green \cite{green}, which we now briefly recall. Let $S$ 
be a semigroup. Write $x\leq_\RR y$, $x\leq_\LL y$ or $x\leq_\JJ y$ if $x$ can be obtained from $y$ by, respectively, left 
multiplication, right multiplication or simultaneous left and right multiplication. Green's relations are the equivalence relations 
defined by
\begin{eqnarray*}
  &\RR=\leq_\RR\cap\geq_\RR\hspace{10mm}
  \LL=\leq_\LL\cap\geq_\LL\hspace{10mm}
  \JJ=\leq_\JJ\cap\geq_\JJ\hspace{10mm}&\\
  &\HH=\RR\cap\LL\hspace{10mm}
  \DD=\langle\RR\cup\LL\rangle&
\end{eqnarray*}
where the final expression denotes the equivalence relation generated by $\RR$ and $\LL$. Let $\D$ denote the set of equivalence 
classes of $\DD$ in $S$, or \emph{$\DD$ classes}. For $D\in\D$, let $\Lcl_D$ and $\Rcl_D$ denote the sets of $\LL$ and $\RR$ 
classes in $D$ respectively. The following property of Green's relations, along with its dual, constitutes a fundamental result 
known as Green's Lemma.
\begin{lem}[Green's Lemma \cite{green}]
Suppose that $x\in S$ and $a\in S$ are such that $xa\;\RR\;x$. Then right multiplication by $a$ gives an $\RR$ class preserving 
bijection from the $\LL$ class of $x$ to the $\LL$ class of $xa$.
\end{lem}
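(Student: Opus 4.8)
\noindent\emph{Proof proposal.} The plan is to construct an explicit two-sided inverse for right multiplication by $a$ and to verify the three assertions---that the map lands in the correct $\LL$ class, that it is a bijection, and that it preserves $\RR$ classes---directly from the definitions of Green's relations. The first step is to use the hypothesis to produce the element driving the inverse. Since $xa\;\RR\;x$, the element $x$ can be recovered from $xa$, so there is some $t\in S^1$ (where $S^1$ denotes $S$ with an identity adjoined if it has none) with $x=xat$. Write $\rho_a\colon z\mapsto za$ and $\rho_t\colon w\mapsto wt$; the claim will be that $\rho_t$ inverts $\rho_a$ on the relevant $\LL$ classes.

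Next I would check that $\rho_a$ carries the $\LL$ class of $x$ into the $\LL$ class of $xa$. If $z$ is $\LL$ related to $x$, then $z=ux$ and $x=wz$ for suitable $u,w\in S^1$; right multiplying by $a$ gives $za=u(xa)$ and $xa=w(za)$, so $za$ and $xa$ are $\LL$ related. A parallel computation, now invoking $x=xat$, shows that $\rho_t$ sends the $\LL$ class of $xa$ back into the $\LL$ class of $x$.

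I would then show that the two maps are mutually inverse, which is the heart of the argument. The key identity is $zat=z$ for every $z$ in the $\LL$ class of $x$: writing $z=ux$ and substituting $xat=x$ gives $zat=u(xat)=ux=z$, and symmetrically $wta=w$ for every $w$ in the $\LL$ class of $xa$. Hence $\rho_t\circ\rho_a$ and $\rho_a\circ\rho_t$ are the identities on the two classes, so each map is a bijection. The $\RR$ class preservation is then immediate: for $z$ in the $\LL$ class of $x$ we have $za\leq_\RR z$ because $za=z\cdot a$, and $z\leq_\RR za$ because $z=(za)t$, so $z\;\RR\;za$.

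The only real obstacle I anticipate is the identity $zat=z$. The hypothesis supplies the single equation $xat=x$ at the element $x$ alone, and the crucial point is that it propagates to the entire $\LL$ class precisely because each such $z$ is a left multiple $z=ux$ of $x$. Once this is isolated the remaining checks are routine symmetric bookkeeping, and the dual statement referred to after the lemma follows by the left--right symmetric version of the same argument.
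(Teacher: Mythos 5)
Your argument is correct and complete: it is the standard proof of Green's Lemma (adjoin an identity to get $t$ with $xat=x$, check $\rho_a$ and $\rho_t$ map the two $\LL$ classes into each other, propagate $xat=x$ to $zat=z$ across the whole $\LL$ class via $z=ux$, and read off $\RR$-preservation from $z=(za)t$). The paper itself offers no proof to compare against --- it states the lemma as a classical result and cites Green's original article --- so the only remark worth making is that your reading of $\leq_\RR$ (namely $x\leq_\RR y$ iff $x\in yS^1$) is the one the paper actually uses in its later arguments (e.g.\ deducing $1_Du_L=u_L$ from $u_L\;\RR\;1_D$), even though the paper's one-line definition of Green's preorders words the left/right pairing the other way around.
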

A semigroup $S$ is said to be \emph{group bound} if for each $x\in S$, there exists a positive integer $n$ such that $x^n$ lies 
in a subgroup of $S$. In particular, every finite semigroup is group bound. The following results are well known.
\begin{thm}\label{known}
Suppose $S$ is a group bound semigroup. Then
\begin{enumerate}
\item The relations $\JJ$ and $\DD$ coincide.
\item If $x\;\DD\;xy$ then $x\;\RR\;xy$.
\item If $y\;\DD\;xy$ then $y\;\LL\;xy$.
\end{enumerate}
\end{thm}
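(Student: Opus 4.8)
The plan is to prove these three facts about group bound semigroups using the theory of Green's relations, focusing on the interplay between the preorders $\leq_\JJ$, $\leq_\RR$, $\leq_\LL$ and the group bound hypothesis. The central tool is the observation that if $x$ lies in a subgroup $G$ of $S$ with identity $e$, then $e$ acts as a two-sided identity on $x$, and $x$ has an inverse $x^{-1}\in G$ with $xx^{-1}=x^{-1}x=e$; this lets me reverse multiplications that would otherwise only go one way.

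For part (ii), the plan is as follows. Suppose $x\;\DD\;xy$. Since $\DD=\JJ$ in a group bound semigroup (which I would establish first, as part (i)), we have $x\;\JJ\;xy$, so in particular $x\leq_\JJ xy$, meaning $x=u(xy)v$ for some $u,v$ (adjoining an identity if necessary). I would iterate this relation to write $x=u^n x (yv)^n$ for all $n$, and then use the group bound hypothesis applied to the element $yv$ (or to a suitable element built from $x,u,v$): some power $(yv)^m$ lies in a subgroup, so it has an idempotent power and can be partially inverted on the right. The key calculation is to show that this forces $xy\leq_\RR x$; combined with the trivial $x\leq_\RR xy$ coming from $x\;\RR x$ versus the relation $xy = x\cdot y$, one concludes $x\;\RR\;xy$. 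Part (iii) is then entirely dual, obtained by reversing left and right throughout (equivalently, by passing to the opposite semigroup $S^{\mathrm{op}}$, in which $\leq_\RR$ and $\leq_\LL$ swap and the group bound property is preserved).

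For part (i), the direction $\DD\subseteq\JJ$ is automatic in any semigroup since $\RR,\LL\subseteq\JJ$ and $\JJ$ is already an equivalence relation containing both, hence contains the equivalence relation $\DD$ they generate. The substantive direction is $\JJ\subseteq\DD$: given $x\;\JJ\;y$, so $x\leq_\JJ y$ and $y\leq_\JJ x$, I would write $x=uyv$ and $y=u'xv'$, and use the group bound hypothesis to control the ascending behaviour under repeated multiplication. Concretely, combining these gives $x=(uu')x(v'v)$, and iterating shows $x$ is fixed (up to $\RR$ and $\LL$) by multiplication on either side by the relevant stabilising elements; the group bound condition supplies the idempotents needed to split this into a left step and a right step, placing $x$ and $y$ in the same $\DD$ class via an intermediate element such as $xv$ or $uy$.

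The main obstacle I anticipate is part (i), specifically the inclusion $\JJ\subseteq\DD$, and more precisely extracting from "$x^n$ lies in a subgroup" exactly the right idempotent to cancel. The group bound hypothesis is applied not to $x$ itself but to products like $uu'$ or $yv$ that arise from the $\JJ$-relations, so I must first argue these auxiliary elements are themselves group bound (which they are, as elements of $S$) and then verify that the idempotent from the subgroup containing a power genuinely acts as a one-sided identity on $x$. Getting the bookkeeping right — choosing a common power $n$ that simultaneously makes several elements land in subgroups, and checking that the resulting idempotents commute past $x$ in the needed way — is the delicate step; once the cancellation is justified, assembling it into an $\RR$ step followed by an $\LL$ step (or vice versa) to witness $x\;\DD\;y$ is routine. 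I expect parts (ii) and (iii) to follow with much less effort once (i) and the basic subgroup machinery are in place.
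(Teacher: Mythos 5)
The paper offers no proof of this theorem: it is quoted as a standard fact about group bound semigroups, so there is nothing internal to compare your argument against, and I will judge the outline on its own terms. Your plan for (ii) is essentially the classical one and it works: $\DD\subseteq\JJ$ in any semigroup, so $x\;\DD\;xy$ gives $x=a(xy)b$ with $a,b\in S^1$, whence $x=a^nx(yb)^n$ for all $n$; choosing $n$ so that $c=(yb)^n$ lies in a subgroup with identity $e$, one gets $xe=a^nxce=a^nxc=x$ and hence $x=xcc^{-1}=(xy)\bigl(b(yb)^{n-1}c^{-1}\bigr)$. Do note which containment is which: $xy$ lies in $xS^1$ trivially, and it is the computation that supplies the nontrivial containment $x\in(xy)S^1$; your sketch labels these the other way around (you call $x\leq_\RR xy$ the trivial one), very possibly because the paper's own definition of $\leq_\RR$ versus $\leq_\LL$ has ``left'' and ``right'' interchanged relative to its later usage. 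Also observe that this argument never uses part (i) --- only the trivial inclusion $\DD\subseteq\JJ$ --- so your stated dependency of (ii) on (i) is unnecessary. Part (iii) is dual, as you say.

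The one genuine weakness is the plan for (i). Attacking $\JJ\subseteq\DD$ directly from $x=(uu')x(v'v)$, with a common power making several auxiliary elements land in subgroups and idempotents that must ``commute past $x$'', is exactly the delicate mess you anticipate, and it is avoidable. The clean route is to reverse the logical order: first prove the $\JJ$-forms of (ii) and (iii) (the computation above only ever uses $x\;\JJ\;xy$), and then (i) falls out. Indeed if $x\;\JJ\;y$, write $y=uxv$ with $u,v\in S^1$ and set $z=xv$. Then $y\leq_\JJ z\leq_\JJ x$ and $x\;\JJ\;y$ force $x\;\JJ\;z$ and $z\;\JJ\;uz$, so the $\JJ$-form of (ii) gives $x\;\RR\;xv=z$ and the $\JJ$-form of (iii) gives $z\;\LL\;uz=y$ (the cases $u=1$ or $v=1$ being trivial); hence $x\;\RR\;z\;\LL\;y$ and $x\;\DD\;y$. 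This is precisely the ``intermediate element $xv$'' you mention in passing; making it the whole proof of (i) eliminates the obstacle you flag as the delicate step and removes the apparent circularity of invoking (i) inside the proof of (ii).
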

Recall also that a semigroup $S$ is \emph{regular} if, for each $x\in S$, there exists $y\in S$ such that $xyx=x$. Equivalently 
$S$ is regular if each $\DD$ class contains an idempotent.
\section{Twisted Semigroup Algebras}
By analogy with twisted group algebras \cite{passman}, we define a twisted semigroup algebra. The following definition is 
essentially that in \cite{twisted}, except that we give no special treatment to the zero of the semigroup (if it exists).
\begin{defn}
Suppose $S$ is a semigroup and $R$ is a commutative ring with $1$. A \emph{twisting} from $S$ into $R$ is a map
\[
  \alpha:S\times S\rightarrow R
\]
which satisfies
\begin{equation}\label{twistassoc}
  \alpha(x,y)\alpha(xy,z)=\alpha(x,yz)\alpha(y,z)
\end{equation}
for all $x,y,z\in S$. The \emph{twisted semigroup algebra} of $S$ over $R$, with twisting $\alpha$, denoted by 
$R^\alpha[S]$, is the $R$-algebra with $R$-basis $S$ and multiplication $\cdot$ defined by
\[
  x\cdot y=\alpha(x,y)(xy)
\]
for $x,y\in S$, and extended by linearity. It follows easily from (\ref{twistassoc}) that $R^\alpha[S]$ is associative.
\end{defn}
For $T\subseteq S$, let $R^\alpha[T]$ denote the $R$-span of $T$ in $R^\alpha[S]$, so that $T$ forms an $R$-basis for 
$R^\alpha[T]$. It is clear that if $T$ is a subsemigroup of $S$, then $R^\alpha[T]$ is a subalgebra, and moreover is 
isomorphic to the twisted semigroup algebra of $T$ whose twisting is the restriction of $\alpha$ to $T$, thus justifying the 
notation.
\section{Cellular Algebras}
Cellular algebras were introduced in the famous paper of Graham and Lehrer \cite{gus}. Although the definition in \cite{gus} requires the 
algebra to be unital, it is easy to see that this does not affect the theory significantly.
\begin{defn}\label{gus}
Suppose that $R$ is a commutative ring with identity. Recall that an anti-involution $*$ on 
an $R$-algebra $A$ is an $R$-linear map from $A$ to $A$ such that
\[
  (a^*)^*=a
  \hspace{10mm}\text{and}\hspace{10mm}
  (ab)^*=b^*a^*
\]
for $a$ and $b\in A$. An associative $R$-algebra $A$ is \emph{cellular}, with \emph{cell datum} $(\Lambda,M,C,*)$, if
\renewcommand{\theenumi}{C\arabic{enumi}}
\begin{enumerate}
\item
  $\Lambda$ is a finite poset, and for each $\lambda\in\Lambda$ we have a finite indexing set $M(\lambda)$ and elements 
  $C^\lambda_{st}\in A$ for $s,t\in M(\lambda)$. The elements
  \[
    \{C^\lambda_{st}\mid\lambda\in\Lambda\text{ and }s,t\in M(\lambda)\}
  \]
  form an $R$-basis of $A$.
\item
  The map $*:A\rightarrow A$ is an anti-involution, whose action on the above basis is given by
  \[
    \left(C^\lambda_{st}\right)^*=C^\lambda_{ts}.
  \]
\item
  For any $\lambda\in\Lambda$, $s\in M(\lambda)$ and $a\in A$, there exist elements $r_a(s',s)\in R$ for 
  $s'\in M(\lambda)$ such that, for each $t\in M(\lambda)$,
  \[
    aC^\lambda_{st}\in\sum_{s'\in M(\lambda)}r_a(s',s)C^\lambda_{s't}
      +A(<\lambda)
  \]
  where
  \[
    A(<\lambda)={\rm span}_R\{C^\mu_{s''t''}\mid\mu<\lambda\text{ and }s'',t''\in M(\mu)\}.
  \]
\end{enumerate}
\renewcommand{\theenumi}{\roman{enumi}}
\end{defn}
\section{The Main Theorem}
In this section we prove a version of Theorem 15 of \cite{james} for regular semigroups and for twisted semigroup algebras. 
As in \cite{james}, we will assume that the group algebras of the maximal subgroups of $S$ are cellular, namely in Assumption 
\ref{assumgrp}. However, in \cite{james} the anti-involutions on these algebras are woven together in the hope of creating an 
anti-involution on the semigroup algebra. In contrast, we start at the top by constructing an anti-involution $*$ on the 
semigroup algebra, and assuming that the anti-involution on each group algebra is a restriction of $*$.	Therefore the assumptions 
we make will ensure that there is an anti-involution on the semigroup which induces an anti-involution on the twisted semigroup 
algebra, and which fixes certain maximal subgroups setwise.

We find it convenient to list the assumptions in the following discussion before stating the theorem. Firstly, we begin with the 
following objects.
\assump{\label{assum1}
Let $S$ be a finite semigroup, $*:S\rightarrow S$ an anti-involution, $R$ a commutative ring with identity, and $\alpha$ a 
twisting from $S$ into $R$.
}
We suppose that $*$ and $\alpha$ are compatible in the following sense.
\assump{\label{twiststar}
Assume that
\[
  \alpha(x,y)=\alpha(y^*,x^*)
\]
for all $x,y\in S$.
}
This assumption implies that $*$ extends to an $R$-linear anti-involution on $R^\alpha[S]$, which we also denote by $*$. The next assumption 
ensures that $*$ fixes certain maximal subgroups, and also that $S$ is regular.
\assump{\label{1D}
Suppose that for each $\DD$ class $D\in\D$, we have an idempotent $1_D\in D$ which is fixed by $*$.
}
Let $L_D$ denote the $\LL$ class of $1_D$, so $L_D^*$ is the $\RR$ class of $1_D$. The $\HH$ class $G_D=L_D\cap L_D^*$ of $1_D$ is 
a group. Moreover $*$ fixes $G_D$, and we denote its restriction to $G_D$ by $*$. We will need a certain twisted group algebra over $G_D$ to 
be cellular. However, for this to give information about the rest of the $\DD$ class $D$, we need the scalar elements $\alpha(x,y)$ to 
be ``sufficiently invertible". The following assumption, although very unnatural, gives us the generality we require. It essentially 
says that although $\alpha$ may not be invertible, when restricted to $L_D\times L_D^*$ it can be decomposed into a constant part and 
an invertible part. We use $G(R)$ to denote the group of units of $R$.
\assump{\label{beta}
For each $\DD$ class $D$, we assume the existence of a map
\[
  \beta:L_D\times L_D^*\rightarrow G(R)
\]
which satisfies the following analogues of (\ref{twistassoc}) and Assumption \ref{twiststar}:
\begin{eqnarray}
  \beta(x,y)\beta(xy,z)&=&\beta(x,yz)\beta(y,z),\label{beta1}\\
  \alpha(x,y)\beta(xy,z)&=&\alpha(x,yz)\beta(y,z),\text{ and}\label{beta2}\\
  \beta(x,y)&=&\beta(y^*,x^*)\label{beta3}
\end{eqnarray}
whenever the relevant values of $\beta$ are defined.
}
Before proceeding, we discuss the implications of Assumption \ref{beta}. By (\ref{beta1}), the restriction of $\beta$ defines a twisting from $G_D$ 
into $R$. Also as above, (\ref{beta3}) implies that $*$ induces an anti-involution on $R^\beta[G_D]$, which we again denote by $*$. Now replacing $x$, 
$y$ and $z$ with $z^*$, $y^*$ and $x^*$ respectively in (\ref{beta2}), and employing Assumption \ref{twiststar} and (\ref{beta3}), we obtain
\begin{equation}\label{beta4}
  \beta(x,yz)\alpha(y,z)=\beta(x,y)\alpha(xy,z)
\end{equation}
whenever the values of $\beta$ are defined. As foreshadowed, the restriction of $\alpha$ to $L_D\times L_D^*$ can be 
obtained from $\beta$ by multiplying by a constant. Indeed putting $x=y=1_D$ in (\ref{beta2}), we obtain
\[
  \alpha(1_D,1_D)\beta(1_D,z)=\alpha(1_D,z)\beta(1_D,z)
\]
for $z\in L_D^*$. Since $\beta(1_D,z)$ is invertible, this gives $\alpha(1_D,z)=\alpha(1_D,1_D)$. Similarly putting $y=z=1_D$ in (\ref{beta1}) gives 
$\beta(x,1_D)=\beta(1_D,1_D)$ for $x\in L_D$. Finally for $x\in L_D$ and $z\in L_D^*$, putting $y=1_D$ in (\ref{beta4}) gives
\[
  \beta(x,z)\alpha(1_D,z)=\beta(x,1_D)\alpha(x,z).
\]
Thus $\beta(x,z)\alpha(1_D,1_D)=\beta(1_D,1_D)\alpha(x,z)$, so that
\[
  \alpha(x,z)=\alpha(D)\beta(x,z),
\]
where $\alpha(D)=\alpha(1_D,1_D)\beta(1_D,1_D)^{-1}$. In particular, multiplication by $\alpha(D)$ gives a homomorphism 
$R^\alpha[G_D]\rightarrow R^\beta[G_D]$. 

As foreshadowed, our final assumption is that certain twisted group algebras of the maximal subgroups are cellular. 
\assump{\label{assumgrp}
Suppose that, for each $\DD$ class $D$, the twisted group algebra $R^\beta[G_D]$ is cellular with cell datum
\[
  (\Lambda_D,M_D,C,*).
\]
}
Note we have assumed that the anti-involution in this cell datum is exactly $*$. Under these assumptions, we will show that 
the twisted semigroup algebra $R^\alpha[S]$ is cellular. To be more precise, we describe the cell datum below. 
Because $S$ is finite, $\DD=\JJ$ by (i) of Theorem \ref{known}, so we have a relation $\leq_\DD$ on $S$. Define the poset
\[
  \Lambda=\{(D,\lambda)\mid D\in\D\text{ and }\lambda\in\Lambda_D\}
\]
with partial order
\[
  (D_1,\lambda_1)\leq(D_2,\lambda_2)\text{ iff }D_1<_\DD D_2
    \text{ or }D_1=D_2\text{ and }\lambda_1\leq\lambda_2\text{ in }\Lambda_{D_1}.
\]
Now for $(D,\lambda)\in\Lambda$, let
\[
  M(D,\lambda)=\Lcl_D\times M_D(\lambda).
\]
Finally for each $L\in\Lcl_D$, choose any $u_L\in L$ with $u_L\;\RR\;1_D$. The basis elements that result from the cell datum of 
$R^\beta[G_D]$ can be written uniquely as
\[
  C^\lambda_{st}=\sum_{g\in G_D}c^\lambda_{st}(g)g
\]
for some coefficients $c^\lambda_{st}(g)\in R$. Define
\[
  C^{(D,\lambda)}_{(L,s)(K,t)}=\sum_{g\in G_D}c^\lambda_{st}(g)\beta(u_L^*,g)\beta(u_L^*g,u_K)(u_L^*gu_K)
    \in R^\alpha[S]
\]
for each $(D,\lambda)\in\Lambda_D$ and $(L,s),(K,t)\in M(D,\lambda)$.
\begin{thm}\label{main}
Under Assumptions \ref{assum1}, \ref{twiststar}, \ref{1D}, \ref{beta} and \ref{assumgrp}, the algebra $R^\alpha[S]$ is 
cellular with the cell datum
\[
  (\Lambda,M,C,*)
\]
as given above.
\end{thm}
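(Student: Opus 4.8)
The plan is to verify conditions C1, C2 and C3 of Definition~\ref{gus} in turn, reducing each to the corresponding property of the cellular twisted group algebra $R^\beta[G_D]$. Throughout I would use standard facts about Green's relations---in particular that an idempotent is a left identity on its $\RR$ class and a right identity on its $\LL$ class, together with the rule locating a product within a $\DD$ class---and the relation $\alpha(x,z)=\alpha(D)\beta(x,z)$ for $x\in L_D$, $z\in L_D^*$ derived before the theorem. Since Assumption~\ref{twiststar} already guarantees that $*$ extends to an anti-involution of $R^\alpha[S]$ acting by $x\mapsto x^*$ on basis elements, no further work is needed there.

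First I would establish C1 by a bijection argument internal to each $\DD$ class. Fixing $D$, the element $u_L^*$ lies in $L_D$ and in the $\RR$ class $L^*$, while $u_K$ lies in $L_D^*$ and in the $\LL$ class $K$; two applications of Green's Lemma then show that $g\mapsto u_L^*gu_K$ is a bijection from $G_D$ onto the $\HH$ class $L^*\cap K$. Letting $L$, $K$ and $D$ vary, the products $u_L^*gu_K$ run exactly once over $S$, so they form the standard $R$-basis of $R^\alpha[S]$. For fixed $D,L,K$ the scalars $\beta(u_L^*,g)\beta(u_L^*g,u_K)$ lie in $G(R)$, so passing from $\{u_L^*gu_K\}$ to $\{C^{(D,\lambda)}_{(L,s)(K,t)}\}$ is a diagonal rescaling by units followed by the invertible change of basis $g\mapsto C^\lambda_{st}$ furnished by C1 for $R^\beta[G_D]$; summing over all blocks gives C1.

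Next I would prove C2. Writing $C^\lambda_{st}=\sum_g c^\lambda_{st}(g)g$, cellularity of $R^\beta[G_D]$ together with \eqref{beta3} yields the coefficient identity $c^\lambda_{st}(g^*)=c^\lambda_{ts}(g)$. Since the induced anti-involution sends the basis element $u_L^*gu_K$ to $u_K^*g^*u_L$, after the substitution $g\mapsto g^*$ the claim $\left(C^{(D,\lambda)}_{(L,s)(K,t)}\right)^*=C^{(D,\lambda)}_{(K,t)(L,s)}$ reduces to the scalar identity
\[
  \beta(u_L^*,g^*)\beta(u_L^*g^*,u_K)=\beta(u_K^*,g)\beta(u_K^*g,u_L),
\]
which follows by rewriting the right-hand side using \eqref{beta3} and then invoking the cocycle relation \eqref{beta1} with the triple $(u_L^*,g^*,u_K)$.

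The main work, and the step I expect to be hardest, is C3; by linearity it suffices to take $a=x\in S$. Computing $x\cdot C^{(D,\lambda)}_{(L,s)(K,t)}$ termwise, each summand is a scalar multiple of $xu_L^*gu_K$, and Theorem~\ref{known} shows that $xu_L^*gu_K\in D$ for some (equivalently every) $g,K$ exactly when $xu_L^*\in D$, a condition depending only on $x$ and $L$. If $xu_L^*\notin D$ every term drops to a strictly lower $\DD$ class, so the product lies in $A(<(D,\lambda))$ and the relevant $r_x$ vanish. If $xu_L^*\in D$, then Theorem~\ref{known}(iii) gives $xu_L^*\in L_D$, so there are a unique $\LL$ class $L'$ and a unique $h_0\in G_D$ with $xu_L^*=u_{L'}^*h_0$, both depending only on $x$ and $L$, and $xu_L^*gu_K=u_{L'}^*(h_0g)u_K$ stays in the $(L',K)$ block. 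The crux is to show that, after dividing out the unit factors $\beta(u_{L'}^*,g'')\beta(u_{L'}^*g'',u_K)$, the coefficient attached to $u_{L'}^*g''u_K$ collapses to $\kappa\,\beta(h_0,g)$ with $g=h_0^{-1}g''$, where $\kappa=\alpha(x,u_L^*)\beta(u_{L'}^*,h_0)^{-1}$ is independent of $g$, $K$ and $t$. To see this I would expand $\alpha(x,u_L^*gu_K)$ by applying the associativity law \eqref{twistassoc} to $x\cdot u_L^*\cdot g\cdot u_K$, substitute $\alpha=\alpha(D)\beta$ on $L_D\times L_D^*$ so that the two factors of $\alpha(D)$ cancel---this is precisely where the possibly non-invertible constant $\alpha(D)$ is prevented from obstructing the argument---and finally simplify using \eqref{beta1}. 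It follows that the $D$-component of $x\cdot C^{(D,\lambda)}_{(L,s)(K,t)}$ equals $\kappa\,\Phi(h_0\cdot C^\lambda_{st})$, where $\Phi:R^\beta[G_D]\to R^\alpha[S]$ is the $R$-linear map $\sum_h a_h h\mapsto\sum_h a_h\beta(u_{L'}^*,h)\beta(u_{L'}^*h,u_K)(u_{L'}^*hu_K)$, which carries $C^\lambda_{s't}$ to $C^{(D,\lambda)}_{(L',s')(K,t)}$ and the lower-order span $\spn_R\{C^\mu_{s''t''}\mid\mu<\lambda\}$ of $R^\beta[G_D]$ into $A(<(D,\lambda))$. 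Applying C3 for $R^\beta[G_D]$ to the element $h_0$ yields scalars $r_{h_0}(s',s)\in R$ depending only on $h_0$ and on $s,s'$; since $\kappa$ and these scalars are independent of $K$ and $t$---the entire $K$-dependence being absorbed into $\Phi$---setting $r_x((L',s'),(L,s))=\kappa\,r_{h_0}(s',s)$ and all other $r_x$ equal to $0$ produces exactly the expansion required by C3.
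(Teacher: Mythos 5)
Your treatment of (C1) and (C2) is correct and matches the paper's argument in substance: (C1) via the Green's Lemma bijections $g\mapsto u_L^*gu_K$ from $G_D$ onto the $\HH$ classes $L^*\cap K$ together with the invertibility of the $\beta$-scalars, and (C2) via the coefficient identity $c^\lambda_{st}(g^*)=c^\lambda_{ts}(g)$ and the scalar identity you state, which does follow from (\ref{beta1}) and (\ref{beta3}). The architecture of your (C3) step --- the case split on whether $au_L^*$ remains in $D$, the factorisation $au_L^*=u_{L'}^*h_0$ with $h_0\in G_D$, and the reduction to (C3) for $R^\beta[G_D]$ applied to $h_0$ --- is also exactly the paper's.

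There is, however, a genuine gap at the crux of (C3). You propose to obtain the needed coefficient identity by applying (\ref{twistassoc}) to get
\[
  \alpha(x,u_L^*)\,\alpha(xu_L^*,gu_K)=\alpha(x,u_L^*gu_K)\,\alpha(u_L^*,gu_K),
\]
rewriting the two factors whose arguments lie in $L_D\times L_D^*$ as $\alpha(D)\beta(\cdot,\cdot)$, and then cancelling $\alpha(D)$ from the two sides. That cancellation is not valid: $\alpha(D)=\alpha(1_D,1_D)\beta(1_D,1_D)^{-1}$ is an arbitrary element of $R$, and may fail to be a unit or even a non-zero-divisor (it may be $0$). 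The identity you actually need,
\[
  \alpha(x,u_L^*)\,\beta(xu_L^*,gu_K)=\alpha(x,u_L^*gu_K)\,\beta(u_L^*,gu_K),
\]
is precisely an instance of (\ref{beta2}) of Assumption \ref{beta}; it is an independent hypothesis, not a consequence of (\ref{twistassoc}), (\ref{beta1}) and the relation $\alpha=\alpha(D)\beta$ on $L_D\times L_D^*$ --- note that $\alpha(x,u_L^*)$ has first argument an arbitrary element of $S$, so the constant-multiple relation never applies to it. This is the entire reason the mixed cocycle condition (\ref{beta2}) appears in Assumption \ref{beta}; the paper packages it as the mixed associativity (\ref{beta2'}) for the partial product $\circ$ and invokes it at exactly this point. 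As written, your argument proves the theorem only when $\alpha(D)$ is cancellable (e.g.\ in the setting of Corollary \ref{mainunits}); replacing the cancellation step by a direct appeal to (\ref{beta2}) repairs it, after which the collapse of the coefficient to $\kappa\,\beta(h_0,g)$ and the identification of the $(L',K)$ block go through as you describe.
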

As mentioned, Assumption \ref{beta} is very unnatural. However, we are primarily interested in two special cases. The first 
is the most natural, and applies when the twisting elements $\alpha(x,y)$ are invertible. In particular this includes the case 
of a semigroup algebra, in which the twisting is trivial.
\begin{cor}\label{mainunits}
Suppose Assumptions \ref{assum1}, \ref{twiststar} and \ref{1D} hold. Suppose also that for each $D\in\D$ and for each 
$x\;\LL\;1_D$ and $y\;\RR\;1_D$, the element $\alpha(x,y)\in R$ is invertible. As in Assumption \ref{assumgrp}, suppose that 
$R^\alpha[G_D]$ is cellular with cell datum
\[
  (\Lambda_D,M_D,C,*).
\]
Then the algebra $R^\alpha[S]$ is cellular with the cell datum
\[
  (\Lambda,M,C,*),
\]
where $\Lambda$, $M$ and $*$ are as given above. The basis elements now take the more elegant form
\[
  C^{(D,\lambda)}_{(L,s)(K,t)}=u_L^*\cdot C^\lambda_{st}\cdot u_K.
\]
\end{cor}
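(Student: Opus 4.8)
The plan is to deduce Corollary~\ref{mainunits} directly from Theorem~\ref{main} by producing, for each $\DD$ class $D$, a map $\beta$ satisfying Assumption~\ref{beta}. The natural candidate is the restriction of $\alpha$ itself: I would set $\beta$ equal to $\alpha|_{L_D\times L_D^*}$. Under the invertibility hypothesis of the corollary, every $x\in L_D$ satisfies $x\;\LL\;1_D$ and every $y\in L_D^*$ satisfies $y\;\RR\;1_D$, so $\alpha(x,y)$ is a unit; hence $\beta$ does indeed take values in $G(R)$, as required.

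I would then verify the three identities of Assumption~\ref{beta}. Since $\beta$ agrees with $\alpha$ wherever it is defined, each of (\ref{beta1}) and (\ref{beta2}) becomes the instance
\[
  \alpha(x,y)\alpha(xy,z)=\alpha(x,yz)\alpha(y,z)
\]
of the twisting identity (\ref{twistassoc}), which holds for all $x,y,z\in S$ and so certainly holds whenever the relevant values of $\beta$ are defined. Likewise (\ref{beta3}) reduces to $\alpha(x,y)=\alpha(y^*,x^*)$, which is exactly Assumption~\ref{twiststar}. Thus Assumption~\ref{beta} is satisfied, and with this choice one computes $\alpha(D)=\alpha(1_D,1_D)\beta(1_D,1_D)^{-1}=1$.

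With $\beta=\alpha$ on $G_D=L_D\cap L_D^*$, the algebras $R^\beta[G_D]$ and $R^\alpha[G_D]$ coincide, so Assumption~\ref{assumgrp} is precisely the cellularity hypothesis supplied in the statement of the corollary. All hypotheses of Theorem~\ref{main} therefore hold, and the theorem yields the cellularity of $R^\alpha[S]$ with cell datum $(\Lambda,M,C,*)$, where $\Lambda$, $M$ and $*$ are as described before the theorem. It remains to rewrite the basis elements in the advertised form. Using the multiplication rule $x\cdot y=\alpha(x,y)(xy)$ twice and writing $C^\lambda_{st}=\sum_{g\in G_D}c^\lambda_{st}(g)g$, I would compute
\[
  u_L^*\cdot C^\lambda_{st}\cdot u_K
    =\sum_{g\in G_D}c^\lambda_{st}(g)\,\alpha(u_L^*,g)\,\alpha(u_L^*g,u_K)\,(u_L^*gu_K),
\]
which, upon substituting $\beta=\alpha$, is exactly the general expression defining $C^{(D,\lambda)}_{(L,s)(K,t)}$. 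Hence the basis elements take the elegant form $C^{(D,\lambda)}_{(L,s)(K,t)}=u_L^*\cdot C^\lambda_{st}\cdot u_K$.

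There is no serious obstacle here: the entire argument is a specialisation of Theorem~\ref{main}, and the only points requiring attention are confirming that $\beta$ lands in $G(R)$ (where the invertibility hypothesis is genuinely used) and that the domain restrictions in Assumption~\ref{beta} cause no difficulty, which holds because (\ref{twistassoc}) and Assumption~\ref{twiststar} are valid on all of $S$ and hence \emph{a fortiori} on the restricted domain of $\beta$.
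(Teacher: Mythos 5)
Your proposal is correct and matches the paper's argument exactly: the paper likewise obtains Corollary \ref{mainunits} from Theorem \ref{main} by taking $\beta$ to be the restriction of $\alpha$ to $L_D\times L_D^*$, with the invertibility hypothesis ensuring $\beta$ lands in $G(R)$ and the identities of Assumption \ref{beta} collapsing to (\ref{twistassoc}) and Assumption \ref{twiststar}. Your verification of the basis-element formula is also the right computation.
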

This follows from Theorem \ref{main} by setting $\beta$ to be the relevant restriction of $\alpha$ for each $D$ class. The 
second special case will aid our investigation of the Brauer, Temperley-Lieb and partition algebras.
\begin{cor}\label{mainconst}
Suppose Assumptions \ref{assum1}, \ref{twiststar} and \ref{1D} hold. Suppose also that we have $\alpha(x,y)=\alpha(x,z)$ 
whenever $y\;\RR\;z$. Suppose that the group algebra $R[G_D]$ is cellular with cell datum
\[
  (\Lambda_D,M_D,C,*).
\]
Then the algebra $R^\alpha[S]$ is cellular with the cell datum
\[
  (\Lambda,M,C,*),
\]
where $\Lambda$, $M$ and $*$ are as given above. The basis elements now take the form
\[
  C^{(D,\lambda)}_{(L,s)(K,t)}=\sum_{g\in G_D}c^\lambda_{st}(g)(u_L^*gu_K).
\]
\end{cor}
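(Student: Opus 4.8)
The plan is to derive Corollary \ref{mainconst} directly from Corollary \ref{mainunits} by exhibiting the hypotheses of the latter. The idea is that the condition ``$\alpha(x,y)=\alpha(x,z)$ whenever $y\;\RR\;z$'' lets us manufacture a suitable $\beta$ out of $\alpha$ itself (up to the constant $\alpha(D)$), so that the twisted group algebra $R^\beta[G_D]$ is actually just the ordinary group algebra $R[G_D]$, whose cellularity is assumed. Concretely, for each $\DD$ class $D$ I would \emph{define} $\beta:L_D\times L_D^*\to R$ by $\beta(x,y)=\alpha(1_D,1_D)^{-1}\alpha(x,y)$ (or some normalisation of this shape), and then check that this $\beta$ satisfies Assumption \ref{beta}, that it takes values in $G(R)$, and that $R^\beta[G_D]=R[G_D]$ as twisted group algebras. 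Once these are verified, Corollary \ref{mainconst} is immediate from Theorem \ref{main} with this choice of $\beta$.

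First I would verify that each $\alpha(x,y)$ for $x\in L_D$, $y\in L_D^*$ is invertible, so that the construction makes sense and so that Corollary \ref{mainunits} could in principle be invoked. The key observation is that $y\;\RR\;1_D$ for every $y\in L_D^*$ (since $L_D^*$ is the $\RR$ class of $1_D$), so the relation-constancy hypothesis forces $\alpha(x,y)=\alpha(x,1_D)$ for all such $y$; combined with the twisting identity (\ref{twistassoc}) and the idempotence of $1_D$, I expect to be able to pin down these values in terms of $\alpha(1_D,1_D)$ and show they are units. This is essentially the computation already carried out in the discussion preceding Theorem \ref{main}, specialised to the present situation, and it is what will let me set $\beta(x,y)=\alpha(x,y)/\alpha(D)$ with $\alpha(D)=\alpha(1_D,1_D)$ and conclude $\beta\equiv 1$ on $G_D$.

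Next I would check the three conditions (\ref{beta1}), (\ref{beta2}) and (\ref{beta3}) of Assumption \ref{beta} for this $\beta$. Since $\beta$ is a constant rescaling of $\alpha$, identity (\ref{beta1}) should reduce to (\ref{twistassoc}) after cancelling the scalar $\alpha(D)$ (the scalars balance because each side of (\ref{twistassoc}) carries the same number of $\alpha$-factors); identity (\ref{beta3}) follows from Assumption \ref{twiststar}; and (\ref{beta2}) should follow from the relation-constancy hypothesis together with (\ref{twistassoc}), using that $z$ and $yz$ lie in a common $\RR$ class in the relevant range so that the differing $\alpha$-arguments agree. The crucial point, which I would then record, is that on $G_D\times G_D$ the map $\beta$ is identically $1$, so $R^\beta[G_D]$ coincides with the untwisted group algebra $R[G_D]$, matching the cell datum hypothesis of this corollary.

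Finally I would substitute into the basis-element formula. With $\beta\equiv 1$ on the relevant products, the general formula $C^{(D,\lambda)}_{(L,s)(K,t)}=\sum_{g\in G_D}c^\lambda_{st}(g)\beta(u_L^*,g)\beta(u_L^*g,u_K)(u_L^*gu_K)$ collapses to $\sum_{g\in G_D}c^\lambda_{st}(g)(u_L^*gu_K)$, which is exactly the claimed form, and then Theorem \ref{main} gives cellularity. The main obstacle I anticipate is verifying condition (\ref{beta2}), because it mixes $\alpha$ and $\beta$ and so requires the relation-constancy hypothesis to be applied in precisely the right place; checking that the $\RR$-class constancy of $\alpha$ genuinely supplies (\ref{beta2}) for all defined arguments, rather than only at the special points $x=y=1_D$ already treated in the discussion, is the step that needs the most care.
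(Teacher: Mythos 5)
Your target of feeding Theorem \ref{main} a $\beta$ for which $R^\beta[G_D]=R[G_D]$ is the right one, and you correctly single out (\ref{beta2}) as the condition that must consume the $\RR$-class-constancy hypothesis. But the specific construction you propose --- $\beta(x,y)=\alpha(1_D,1_D)^{-1}\alpha(x,y)$, with a preliminary step of ``verifying that each $\alpha(x,y)$ for $x\in L_D$, $y\in L_D^*$ is invertible'' --- cannot be carried out: the hypotheses of Corollary \ref{mainconst} contain no invertibility assumption on $\alpha$ at all, and no such invertibility can be deduced from relation-constancy (the identity (\ref{twistassoc}) only yields $\alpha(x,1_D)^2=\alpha(x,1_D)\alpha(1_D,1_D)$, which is useless without cancellation). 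This is not a technicality: the entire reason Corollary \ref{mainconst} exists separately from Corollary \ref{mainunits}, and the reason Assumption \ref{beta} is formulated so awkwardly, is to handle the Temperley--Lieb, Brauer and partition algebras, where $\alpha(x,y)=\delta^{m(x,y)}$ with $\delta$ possibly a non-unit or zero; there $\alpha(1_D,1_D)=\delta^k$ is not invertible on the lower $\DD$ classes, so your $\beta$ is undefined and any route through Corollary \ref{mainunits} is closed off in exactly the cases this corollary is designed for.

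The correct choice is simpler than what you attempted: take $\beta(x,y)=1$ identically. Then $\beta$ is unit-valued, (\ref{beta1}) and (\ref{beta3}) are trivial, $R^\beta[G_D]$ is literally the group algebra $R[G_D]$ appearing in the hypothesis, and the displayed basis formula collapses as claimed. The only condition with content is (\ref{beta2}), which reduces to $\alpha(x,y)=\alpha(x,yz)$ for $y\in L_D$ and $z\in L_D^*$. Here note a second slip in your sketch: the two arguments of $\alpha$ that must be $\RR$-related are the second arguments $y$ and $yz$, not ``$z$ and $yz$'' as you wrote. That $y\;\RR\;yz$ holds follows from Green's Lemma: since $z\;\RR\;1_D$ and $1_D$ is idempotent we have $1_Dz=z$, so right multiplication by $z$ is an $\RR$-class-preserving map on $L_D$, whence $yz\;\RR\;y$ and the constancy hypothesis gives $\alpha(x,y)=\alpha(x,yz)$.
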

This follows from Theorem \ref{main} be setting $\beta(x,y)=1$. To verify (\ref{beta2}) of Assumption \ref{beta} in this case, suppose $\beta(xy,z)$ and 
$\beta(y,z)$ are defined, so that $y\in L_D$ and $z\in L_D^*$ for some $D\in\D$. Then $1_Dz=z$, so Green's Lemma shows that right multiplication by $z$ is an 
$\RR$ class preserving map on $L_D$. In particular $yz\;\RR\;y$, so $\alpha(x,y)=\alpha(x,yz)$ as required.

The proof of Theorem \ref{main} contains many notationally unpleasant calculations related to associativity. To partially alleviate 
this, we introduce a partial product on $R^\alpha[S]$. For each $D\in\D$, define
\[
  \circ:R^\alpha[L_D]\times R^\alpha[L_D^*]\rightarrow R^\alpha[S]
\]
by setting $x\circ y=\beta(x,y)(xy)$ for $x\in L_D$ and $y\in L_D^*$, and extending by $R$-linearity. It will often be necessary to check that the arguments 
of $\circ$ lie in $R^\alpha[L_D]$ and $R^\alpha[L_D^*]$ respectively, for the appropriate $D$; we generally leave this to the reader. It should be noted that 
in the special case of Corollary \ref{mainunits}, this product coincides with $\cdot$, so the associativity of $\cdot$ makes many of the tedious calculations trivial; 
thus a direct proof of this case is much more natural, and still contains the essential ideas.

Note that $R^\beta[G_D]$ is equal to $R^\alpha[G_D]$ as an $R$-module, and the product on $R^\beta[G_D]$ is just the restriction of $\circ$. Also the above 
definition of $C^{(D,\lambda)}_{(L,s)(K,t)}$ now becomes
\[
  C^{(D,\lambda)}_{(L,s)(K,t)}=(u_L^*\circ C^\lambda_{st})\circ u_K
\]
for $(D,\lambda)\in\Lambda_D$ and $(L,s),(K,t)\in M(D,\lambda)$. Applying linearity to equations (\ref{beta1}), (\ref{beta2}), (\ref{beta3}) and 
(\ref{beta4}) respectively, we obtain:
\begin{eqnarray}
  (a\circ b)\circ c&=&a\circ(b\circ c),\label{beta1'}\\
  (a\cdot b)\circ c&=&a\cdot(b\circ c)\text{ if }(\supp a)(\supp b)\subseteq L_D,\label{beta2'}\\
%
%
%
%
%
  (a\circ b)^*&=&b^*\circ a^*,\text{ and}\label{beta3'}\\
  (a\circ b)\cdot c&=&a\circ(b\cdot c)\text{ if }(\supp b)(\supp c)\subseteq L_D^*\label{beta4'}
\end{eqnarray}
for any $a$, $b$ and $c\in R^\alpha[S]$, whenever the relevant values of $\circ$ are defined. Here $\supp a$ is the set of elements of $S$ which appear with 
nonzero coefficient in $a$. We now give a proof Theorem \ref{main}, which for clarity we separate into three lemmas corresponding to properties (C1), (C2) 
and (C3) of Definition \ref{gus}.  
\begin{lem}\label{C1lem}
The elements
\[
  \left\{\left.C^{(D,\lambda)}_{(L,s)(K,t)}\;\right|\;(D,\lambda)\in\Lambda
    \text{ and }(L,s),(K,t)\in M(D,\lambda)\right\}
\]
form an $R$-basis for $R^\alpha[S]$.
\end{lem}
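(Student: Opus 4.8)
The plan is to realise the proposed elements as the images of the cellular bases of the group algebras $R^\beta[G_D]$ under explicit, invertible $R$-module maps, one for each ordered pair of $\HH$ classes of each $\DD$ class, and then to patch these together. The organising observation is that the basis $S$ of $R^\alpha[S]$ is partitioned by the $\DD$ classes, and each $D$ is in turn partitioned by its $\HH$ classes, so as $R$-modules we have $R^\alpha[S]=\bigoplus_{D\in\D}R^\alpha[D]$ and $R^\alpha[D]=\bigoplus_{H}R^\alpha[H]$, the inner sum running over the $\HH$ classes $H$ of $D$. It therefore suffices to produce, for each fixed $D$, $L$ and $K$, a basis of the span of a single $\HH$ class consisting of the relevant $C^{(D,\lambda)}_{(L,s)(K,t)}$, and then sum.

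First I would pin down the set $\{u_L^* g u_K\mid g\in G_D\}$. Since $u_L\;\RR\;1_D$ and $u_K\;\RR\;1_D$, applying $*$ and the idempotence of $1_D$ gives the relations $u_L^*1_D=u_L^*$ and $1_Du_K=u_K$. Using the dual of Green's Lemma, left multiplication by $u_L^*$ is then an $\LL$ class preserving bijection from the $\RR$ class $L_D^*$ of $1_D$ onto the $\RR$ class of $u_L^*$; restricting to $G_D=L_D\cap L_D^*$ shows $g\mapsto u_L^*g$ is a bijection from $G_D$ onto the $\HH$ class $L^*\cap L_D$. Likewise, by Green's Lemma, right multiplication by $u_K$ is an $\RR$ class preserving bijection from $L_D$ onto the $\LL$ class $K$, which restricts to a bijection from $L^*\cap L_D$ onto the $\HH$ class $H_{L,K}:=L^*\cap K$. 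Composing, $g\mapsto u_L^*gu_K$ is a bijection from $G_D$ onto $H_{L,K}$. As $L$ and $K$ range over $\Lcl_D$, the $\RR$ class $L^*$ ranges over $\Rcl_D$ while $K$ ranges over $\Lcl_D$, so the $H_{L,K}$ exhaust the $\HH$ classes of $D$ without repetition; hence $\{u_L^*gu_K\mid L,K\in\Lcl_D,\ g\in G_D\}$ is merely a re-enumeration of $D$.

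Next, for fixed $D$, $L$ and $K$ I would introduce the $R$-linear map $\phi_{L,K}$ on the free module $R^\beta[G_D]=R^\alpha[G_D]$ that sends each basis element $g\in G_D$ to $\beta(u_L^*,g)\,\beta(u_L^*g,u_K)\,(u_L^*gu_K)$. Because each coefficient $\beta(u_L^*,g)\beta(u_L^*g,u_K)$ lies in $G(R)$ and $g\mapsto u_L^*gu_K$ is the bijection just established, $\phi_{L,K}$ carries the basis $G_D$ to a set of unit multiples of the basis $H_{L,K}$, and so is an isomorphism of free $R$-modules onto $R^\alpha[H_{L,K}]$. Writing $C^\lambda_{st}=\sum_{g\in G_D}c^\lambda_{st}(g)g$ and applying $\phi_{L,K}$ yields precisely $\phi_{L,K}(C^\lambda_{st})=C^{(D,\lambda)}_{(L,s)(K,t)}$. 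Since $\{C^\lambda_{st}\mid\lambda\in\Lambda_D,\ s,t\in M_D(\lambda)\}$ is an $R$-basis of $R^\beta[G_D]$ by Assumption \ref{assumgrp}, its image under the isomorphism $\phi_{L,K}$ is an $R$-basis of $R^\alpha[H_{L,K}]$; that is, the elements $C^{(D,\lambda)}_{(L,s)(K,t)}$ with $\lambda,s,t$ varying form a basis of the span of the $\HH$ class $H_{L,K}$.

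Finally I would assemble the decomposition: summing over $L,K\in\Lcl_D$ gives a basis of $R^\alpha[D]$, and summing over $D\in\D$ gives the claimed basis of $R^\alpha[S]$. I expect the main obstacle to be the first step, namely verifying that $g\mapsto u_L^*gu_K$ really is a bijection of $G_D$ onto a full $\HH$ class and that these products never leave $D$. This is where regularity (through the fixed idempotents $1_D$) and Green's Lemma and its dual do the essential work; once the bijection is secured, the invertibility of the $\beta$ values makes $\phi_{L,K}$ an isomorphism and reduces the whole claim to the cellular basis of $R^\beta[G_D]$, which is given.
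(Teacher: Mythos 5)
Your proposal is correct and follows essentially the same route as the paper: both use Green's Lemma (and its dual, via $u_L^*1_D=u_L^*$ and $1_Du_K=u_K$) to show $g\mapsto u_L^*gu_K$ bijects $G_D$ onto the $\HH$ class $L^*\cap K$, then use the invertibility of the $\beta$ values to promote this to an $R$-module isomorphism carrying the cellular basis of $R^\beta[G_D]$ onto the elements $C^{(D,\lambda)}_{(L,s)(K,t)}$, and finally assemble via the direct sum decomposition of $R^\alpha[S]$ over $\DD$ and $\HH$ classes. The only cosmetic difference is that you package the two maps $a\mapsto u_L^*\circ a$ and $a\mapsto a\circ u_K$ into a single map $\phi_{L,K}$.
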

\begin{proof}
Consider a $\DD$ class $D\in\D$. Now $*$ preserves $\DD$ and $1_D^*=1_D$, so $*$ maps $D$ onto $D$. Since $*$ is an 
anti-involution, it therefore maps the $\LL$ classes in $D$ bijectively onto the $\RR$ classes in $D$. That is, 
each $\RR$ class in $D$ is uniquely expressible as $L^*$ for some $L\in\Lcl_D$. Thus each $\HH$-class in $D$ is 
uniquely expressible as $L^*\cap K$ for some $L,K\in\Lcl_D$.

For each $L\in\Lcl_D$, we have $u_L\;\RR\;1_D$ by choice of $u_L$. Since $1_D$ is idempotent, this implies that $1_Du_L=u_L$. 
By Green's Lemma, right multiplication by $u_L$ then gives an $\RR$ class preserving bijection from the $\LL$ class of $1_D$ to 
the $\LL$ class of $u_L$, which is $L$. Applying $*$ we have $u_L^*1_D=u_L^*$, so left multiplication by $u_L^*$ gives an $\LL$ 
class preserving bijection from the $\RR$ class of $1_D$ to the $\RR$ class of $u_L^*$, namely $L^*$. We therefore have two bijections
\[
  G_D\rightarrow L^*\cap L_D\rightarrow L^*\cap K
\]
given respectively by $g\mapsto u_L^*g$ and $x\mapsto xu_K$. Thus we have $R$-module homomorphisms
\[
  R^\beta[G_D]=R^\alpha[G_D]\rightarrow R^\alpha[L^*\cap L_D]\rightarrow R^\alpha[L^*\cap K]
\]
given respectively by $a\mapsto u_L^*\circ a$ and $a\mapsto a\circ u_K$. On the natural bases these homomorphisms are 
given by
\begin{eqnarray*}
  g&\mapsto&\beta(u_L^*,g)(u_L^*g)\hspace{10mm}\text{for }g\in G_D,\text{ and}\\
  x&\mapsto&\beta(x,u_K)(xu_K)\hspace{10mm}\text{for }x\in L^*\cap L_D.\\
\end{eqnarray*}
Because the elements $\beta(x,y)$ are invertible, and the above maps between the natural bases are bijections, these homomorphisms are 
$R$-module isomorphisms. Now the elements
\[
  \left\{\left.C^\lambda_{st}\;\right|\;\lambda\in\Lambda_D\text{ and }s,t\in M_D(\lambda)\right\}
\]
form an $R$-basis for $R^\beta[G_D]$, so applying the above isomorphisms, the elements
\begin{eqnarray*}
  \left\{\left.\left(u_L^*\circ C^\lambda_{st}\right)\circ u_K\;\right|\;
      \lambda\in\Lambda_D\text{ and }s,t\in M_D(\lambda)\right\}\hspace{-40mm}&&\\
    &=&\left\{\left.C^{(D,\lambda)}_{(L,s)(K,t)}\;\right|\;
      \lambda\in\Lambda_D\text{ and }s,t\in M_D(\lambda)\right\}
\end{eqnarray*}
form an $R$-basis for $R^\alpha[L^*\cap K]$. Now $D$ is a disjoint union of its $\HH$ classes
\[
  D=\coprod_{L,K\in\Lcl_D}L^*\cap K,
\]
and $S$ is in turn a disjoint union of its $\DD$ classes
\[
  S=\coprod_{D\in\D}D.
\]
Thus
\[
  R^\alpha[D]=\bigoplus_{L,K\in\Lcl_D}R^\alpha[L^*\cap K]
\]
and
\[
  R^\alpha[S]=\bigoplus_{D\in\D}R^\alpha[D],
\]
so that
\begin{equation}\label{Dbasis}
  \left\{\left.C^{(D,\lambda)}_{(L,s)(K,t)}\;\right|\;\lambda\in\Lambda_D
    \text{ and }(L,s),(K,t)\in M(D,\lambda)\right\}
\end{equation}
form an $R$-basis for $R^\alpha[D]$, and
\[
  \left\{\left.C^{(D,\lambda)}_{(L,s)(K,t)}\;\right|\;(D,\lambda)\in\Lambda
    \text{ and }(L,s),(K,t)\in M(D,\lambda)\right\}
\]
form an $R$-basis for $R^\alpha[S]$.
\end{proof}
This verifies property (C1) in Definition \ref{gus}. We next prove property (C2). We already know that $*$ is an $R$-linear anti-involution of 
$R^\alpha[S]$, so we need only check the following.
\begin{lem}
The action of $*$ on the basis elements $C^{(D,\lambda)}_{(L,s)(K,t)}$ is given by
\[
  \left(C^{(D,\lambda)}_{(L,s)(K,t)}\right)^*=C^{(D,\lambda)}_{(K,t)(L,s)}.
\]
\end{lem}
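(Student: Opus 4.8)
The plan is to exploit the clean expression $C^{(D,\lambda)}_{(L,s)(K,t)}=(u_L^*\circ C^\lambda_{st})\circ u_K$ together with the behaviour of $*$ under the partial product $\circ$. Since $*$ is already known to be an $R$-linear anti-involution of $R^\alpha[S]$, and since the basis elements were built from the group-algebra basis by the operation $\circ$, the whole computation should reduce to three ingredients: the identity $(a\circ b)^*=b^*\circ a^*$ from (\ref{beta3'}); the fact that $*$ restricted to the cell datum of $R^\beta[G_D]$ satisfies $(C^\lambda_{st})^*=C^\lambda_{ts}$, which is property (C2) for the group algebra and holds by Assumption~\ref{assumgrp}; and the involutivity $u_L^{**}=u_L$.

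First I would apply $*$ to the outer $\circ$ in $C^{(D,\lambda)}_{(L,s)(K,t)}=(u_L^*\circ C^\lambda_{st})\circ u_K$. By (\ref{beta3'}) this gives $u_K^*\circ(u_L^*\circ C^\lambda_{st})^*$. Applying (\ref{beta3'}) a second time to the inner factor, and using $(C^\lambda_{st})^*=C^\lambda_{ts}$ together with $(u_L^*)^*=u_L$, I obtain $(u_L^*\circ C^\lambda_{st})^*=C^\lambda_{ts}\circ u_L$. Hence
\[
  \left(C^{(D,\lambda)}_{(L,s)(K,t)}\right)^*=u_K^*\circ\left(C^\lambda_{ts}\circ u_L\right).
\]

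It remains to reassociate. By the associativity (\ref{beta1'}) of $\circ$ I can rewrite the right-hand side as $(u_K^*\circ C^\lambda_{ts})\circ u_L$, which is by definition $C^{(D,\lambda)}_{(K,t)(L,s)}$, completing the proof. The only genuine care needed --- the step I expect to be the main obstacle --- is verifying at each stage that the arguments of $\circ$ lie in the correct modules $R^\alpha[L_D]$ and $R^\alpha[L_D^*]$, so that both (\ref{beta3'}) and (\ref{beta1'}) legitimately apply. These support conditions follow from the membership facts established in the proof of Lemma~\ref{C1lem}: one has $u_L,u_K\in L_D^*$ and $u_L^*,u_K^*\in L_D$, while $C^\lambda_{st}$ and $C^\lambda_{ts}$ are supported on $G_D=L_D\cap L_D^*$. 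Tracking these through the two applications of $*$ and the single reassociation confirms that every occurrence of $\circ$ in the display is defined, so the identity is valid.
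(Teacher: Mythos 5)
Your proposal is correct and follows essentially the same route as the paper: apply (\ref{beta3'}) twice to get $u_K^*\circ\bigl(\bigl(C^\lambda_{st}\bigr)^*\circ u_L\bigr)$, invoke $\bigl(C^\lambda_{st}\bigr)^*=C^\lambda_{ts}$ from Assumption~\ref{assumgrp}, and reassociate with (\ref{beta1'}). Your extra attention to the support conditions (that the arguments of each $\circ$ lie in $R^\alpha[L_D]$ and $R^\alpha[L_D^*]$, via $u_L,u_K\in L_D^*$ and $\supp C^\lambda_{st}\subseteq G_D$) is a point the paper leaves implicit, and is handled correctly.
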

\begin{proof}
By Assumption \ref{assumgrp}, we have $\left(C^\lambda_{st}\right)^*=C^\lambda_{ts}$. Thus
\begin{eqnarray*}
  \left(C^{(D,\lambda)}_{(L,s)(K,t)}\right)^*
    &=&\left(\left(u_L^*\circ C^\lambda_{st}\right)\circ u_K\right)^*\\
    &=&u_K^*\circ\left(\left(C^\lambda_{st}\right)^*\circ u_L\right)\text{ using (\ref{beta3'}) twice}\\
    &=&\left(u_K^*\circ C^\lambda_{ts}\right)\circ u_L\text{ by (\ref{beta1'})}\\
    &=&C^{(D,\lambda)}_{(K,t)(L,s)},
\end{eqnarray*}
as required.
\end{proof}
Suppose $D,D'\in\D$ satisfy $D'<_\DD D$. Pick any $\lambda\in\Lambda_D$. By (\ref{Dbasis}), we have
\begin{eqnarray*}
  R^\alpha[D']
    &=&\text{span}_R\bigg\{C^{(D',\lambda')}_{(L',s')(K',t')}\bigg\}.\\
    &\subseteq&\text{span}_R\bigg\{C^{(D'',\lambda'')}_{(L'',s'')(K'',t'')}\;\bigg|\;D''<_\DD D\bigg\}.\\
    &\subseteq&R^\alpha[S](<(D,\lambda)),
\end{eqnarray*}
where $R^\alpha[S](<(D,\lambda))$ is as defined in Definition \ref{gus}. Thus
\begin{equation}\label{D'}
  \bigoplus_{D'<_\DD D}R^\alpha[D']\subseteq R^\alpha[S](<(D,\lambda)).
\end{equation}
We now prove (C3).
\begin{lem}\label{C3lem}
Given $(D,\lambda)\in\Lambda$ and $(L,s)\in M(D,\lambda)$, and for an element $a\in R^\alpha[S]$, there exist elements $r_a((L',s'),(L,s))\in R$ for 
$(L',s')\in M(D,\lambda)$ such that
\[
  a\cdot C^{(D,\lambda)}_{(L,s)(K,t)}
    \in\sum_{(L',s')\in M(D,\lambda)}r_a((L',s'),(L,s))C^{(D,\lambda)}_{(L',s')(K,t)}+R^\alpha[S](<(D,\lambda))
\]
for each $(K,t)\in M(D,\lambda)$.
\end{lem}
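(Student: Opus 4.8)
The plan is to verify (C3) by linearity, reducing at once to the case $a=x$ for a single semigroup element $x\in S$; since the coefficients produced will depend $R$-linearly on $a$ and the claimed independence from $(K,t)$ is preserved under linear combinations, this reduction is harmless. Writing the basis element as $C^{(D,\lambda)}_{(L,s)(K,t)}=(u_L^*\circ C^\lambda_{st})\circ u_K$ and setting $b=u_L^*\circ C^\lambda_{st}$, which has support in the single $\HH$ class $L^*\cap L_D$, the whole computation reduces to understanding $x\cdot(b\circ u_K)$.

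First I would establish a dichotomy governed entirely by the single element $xu_L^*$. Every $w\in L^*\cap L_D$ can be written $w=u_L^*g$ with $g\in G_D$, so $xw=(xu_L^*)g$. If $xu_L^*\notin D$ then $xu_L^*<_\DD D$, whence $xw=(xu_L^*)g\leq_\DD xu_L^*<_\DD D$ for every such $w$; thus the entire product $x\cdot(b\circ u_K)$ has support strictly below $D$ and lies in $R^\alpha[S](<(D,\lambda))$ by (\ref{D'}), so all $r_x$ vanish. Otherwise $xu_L^*\in D$, and since $u_L^*\;\DD\;xu_L^*$, part (iii) of Theorem \ref{known} forces $xu_L^*\;\LL\;1_D$, i.e. $xu_L^*\in L_D$; Green's Lemma then shows every $xw$ stays in the single $\HH$ class $L'^*\cap L_D$, where $L'^*$ is the $\RR$ class of $xu_L^*$. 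In particular $xu_L^*=u_{L'}^*h$ for a unique $h\in G_D$.

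In this surviving case the support hypothesis of (\ref{beta2'}) holds (every $xw\in L_D$), so $x\cdot(b\circ u_K)=(x\cdot b)\circ u_K$; a second application of (\ref{beta2'}) together with the associativity (\ref{beta1'}) then rewrites $x\cdot b=x\cdot(u_L^*\circ C^\lambda_{st})$ as $\mu\,u_{L'}^*\circ(h\circ C^\lambda_{st})$, where $\mu=\alpha(x,u_L^*)\beta(u_{L'}^*,h)^{-1}\in R$ and $h\circ C^\lambda_{st}$ is simply left multiplication by the group element $h$ inside $R^\beta[G_D]$. At this point I would invoke the cellularity of $R^\beta[G_D]$: property (C3) for that algebra gives $h\circ C^\lambda_{st}=\sum_{s'}r_h(s',s)C^\lambda_{s't}+z$ with $z\in R^\beta[G_D](<\lambda)$ and, crucially, coefficients $r_h(s',s)$ independent of $t$. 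Substituting back and using $(u_{L'}^*\circ C^\lambda_{s't})\circ u_K=C^{(D,\lambda)}_{(L',s')(K,t)}$ yields $x\cdot C^{(D,\lambda)}_{(L,s)(K,t)}=\mu\sum_{s'}r_h(s',s)\,C^{(D,\lambda)}_{(L',s')(K,t)}+(u_{L'}^*\circ z)\circ u_K$; the last term expands into basis elements $C^{(D,\lambda')}_{(L',s'')(K,t'')}$ with $\lambda'<\lambda$, hence lies in $R^\alpha[S](<(D,\lambda))$.

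I expect the main obstacle to be the partial-product bookkeeping rather than any deep idea: one must check the support hypotheses attached to (\ref{beta2'}) (and later (\ref{beta4'})) before each use, and the entire argument hinges on the clean dichotomy above, which is precisely what guarantees that either the support hypothesis of (\ref{beta2'}) holds outright or the whole contribution drops into a strictly lower $\DD$ class. The conceptual payoff is that left multiplication in $R^\alpha[S]$ is transported, modulo lower terms, to left multiplication by a single group element $h$ in $R^\beta[G_D]$, so that (C3) for the semigroup algebra follows from (C3) for the maximal subgroup. Finally I would confirm the $(K,t)$-independence: the scalar $\mu$ depends only on $x$, $L$ and $L'$, the element $h$ is determined by $x$ and $L$, and $r_h(s',s)$ comes from the group algebra, so none involves $K$ or $t$; setting $r_x((L',s'),(L,s))=\mu\,r_h(s',s)$ (and $0$ for the remaining $\RR$ classes) completes the single-element case, and linearity finishes the proof.
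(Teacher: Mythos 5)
Your argument is correct and follows essentially the same route as the paper's own proof: reduce to $a=x\in S$, split on whether $xu_L^*$ falls below $D$ or stays in $D$, write $xu_L^*=u_{L_1}^*h$ with $h\in G_D$ via Theorem \ref{known}(iii) and Green's Lemma, and transport (C3) from $R^\beta[G_D]$ using (\ref{beta1'}) and (\ref{beta2'}), ending with the same coefficients $\alpha(x,u_L^*)\beta(u_{L_1}^*,h)^{-1}r_h(s',s)$. The only differences are cosmetic (a slightly different bracketing of the $\circ$/$\cdot$ manipulations and an explicit remark on $(K,t)$-independence).
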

\begin{proof}
Because $S$ spans $R^\alpha[S]$ as an $R$-module, it suffices to take $a\in S$. Because $u_L^*\in D$, clearly $au_L^*\leq_\DD D$. First suppose that 
$au_L^*<_\DD D$. Then $au_L^*gu_K<_\DD D$ for all $g\in G_D$ and $K\in\LL_D$, so (\ref{D'}) gives
\[
  \alpha(a,u_L^*gu_K)c^\lambda_{st}(g)\beta(u_L^*,g)\beta(u_L^*g,u_K)(au_L^*gu_K)\in R^\alpha[S](<(D,\lambda))
\]
for $t\in M_D(\lambda)$. Summing over $g\in G_D$ gives $a\cdot C^{(D,\lambda)}_{(L,s)(K,t)}\in R^\alpha[S](<(D,\lambda))$. It therefore suffices to take 
$r_a((L',s'),(L,s))=0$ for all $(L',s')\in M(D,\lambda)$ in this case.

The other case is when $au_L^*\in D$. It follows from (iii) of Theorem \ref{known} that $au_L^*\;\LL\;u_L^*$, so that $au_L^*\in L_D$. Thus if $L_1^*\in\Rcl_D$ is 
the $\RR$ class of $au_L^*$, then $au_L^*\;\HH\;u_{L_1}^*$. As in the proof of Lemma \ref{C1lem} above, it follows from Green's Lemma that $au_L^*=u_{L_1}^*h$ for 
some $h\in G_D$. By Assumption \ref{assumgrp}, there exist ring elements $r_h(s',s)\in R$ for $s'\in M_D(\lambda)$ such that
\begin{eqnarray*}
  h\circ C^\lambda_{st}-\sum_{s'\in M_D(\lambda)}r_h(s',s)C^\lambda_{s't}
    &\in&R^\beta[G_D](<\lambda)\\
    &=&\spn_R\left\{\left.C^\mu_{uv}\right|\mu<\lambda\text{ and }u,v\in M_D(\mu)\right\}.
\end{eqnarray*}
Applying $u_{L_1}^*\circ$ on the left and ${}\circ u_K$ on the right, we obtain
\begin{eqnarray*}
  \left(u_{L_1}^*\circ\left(h\circ C^\lambda_{st}\right)\right)\circ u_K
      -\sum_{s'\in M_D(\lambda)}r_h(s',s)C^{(D,\lambda)}_{(L_1,s')(K,t)}\hspace{-80mm}\\
    &\in&\spn_R\left\{\left.C^{(D,\mu)}_{(L_1,u)(K,v)}\right|\mu<\lambda\text{ and }u,v\in M_D(\mu)\right\}\\
    &\subseteq&R^\alpha[S](<(D,\lambda)).
\end{eqnarray*}
We can also calculate
\begin{eqnarray*}
  u_{L_1}^*\circ\left(h\circ C^\lambda_{st}\right)
    &=&\left(u_{L_1}^*\circ h\right)\circ C^\lambda_{st}\text{ by (\ref{beta1'})}\\
    &=&\beta(u_{L_1}^*,h)\left(u_{L_1}^*h\right)\circ C^\lambda_{st}\\
    &=&\beta(u_{L_1}^*,h)\left(au_L^*\right)\circ C^\lambda_{st}.
\end{eqnarray*}
Combining these, we obtain
\begin{eqnarray*}
  a\cdot C^{(D,\lambda)}_{(L,s)(K,t)}
    &=&a\cdot\left(u_L^*\circ\left(C^\lambda_{st}\circ u_K\right)\right)\\
    &=&\left(a\cdot u_L^*\right)\circ\left(C^\lambda_{st}\circ u_K\right)\text{ by (\ref{beta2'})}\\
    &=&\alpha(a,u_L^*)\left(au_L^*\right)\circ\left(C^\lambda_{st}\circ u_K\right)\\
    &=&\alpha(a,u_L^*)\left(\left(au_L^*\right)\circ C^\lambda_{st}\right)\circ u_K\text{ by (\ref{beta1'})}\\
    &=&\alpha(a,u_L^*)\beta(u_{L_1}^*,h)^{-1}\left(u_{L_1}^*\circ\left(h\circ C^\lambda_{st}\right)\right)\circ u_K\\
    &\in&\alpha(a,u_L^*)\beta(u_{L_1}^*,h)^{-1}\sum_{s'\in M_D(\lambda)}r_h(s',s)C^{(D,\lambda)}_{(L_1,s')(K,t)}\\
    &&\hspace{20mm}+R^\alpha[S](<(D,\lambda)).
\end{eqnarray*}
It therefore suffices to take
\[
  r_a((L',s'),(L,s))=\begin{cases}
    \alpha(a,u_L^*)\beta(u_{L_1}^*,h)^{-1}r_h(s',s)&\text{if }L'=L_1\\
    0&\text{if }L'\neq L_1.
  \end{cases}
\]
\end{proof}
\section{Linear Representations of Regular Semigroups}
Section 2 of \cite{gus} describes how to construct \emph{cell representations} of a cellular algebra $A$ from its cell 
datum $(\Lambda,M,C,*)$, and defines bilinear forms $\phi^\lambda$ associated with these representations. For convenience 
we reproduce the definitions here. For each $\lambda\in\Lambda$, the cell representation $W(\lambda)$ corresponding to 
$\lambda$ is the left $A$-module with $R$-basis $\{C_s\mid s\in M(\lambda)\}$ and $A$-action
\[
  aC_s=\sum_{s'\in M(\lambda)}r_a(s',s)C_{s'}
\]
for $a\in A$ and $s\in M(\lambda)$. We use
\[
  \rho^\lambda:A\rightarrow\mat_{M(\lambda)}(R)
\]
to denote the corresponding representation relative to the natural basis. That is,
\[
  \rho^\lambda(a)_{st}=r_a(s,t)
\]
for $a\in A$ and $s,t\in M(\lambda)$. For each $a\in A$, the bilinear form $\phi_a^\lambda$ on $W(\lambda)$ 
is defined on the basis elements so that $\phi_a^\lambda(C_s,C_t)$ is the unique element of $R$ satisfying
\begin{equation}\label{tst's'}
  C^\lambda_{s's}aC^\lambda_{tt'}\in\phi_a^\lambda(C_s,C_t)C^\lambda_{s't'}+A(<\lambda)
\end{equation}
for all $s',t'\in M(\lambda)$. This is extended to be $R$-bilinear. We are most interested in the bilinear form
\[
  \phi^\lambda=\phi_1^\lambda.
\]
We use $\Phi^\lambda$ to denote the matrix representation of $\phi^\lambda$ relative to the natural basis. That is, 
$\Phi^\lambda\in\mat_{M(\lambda)}(R)$ is defined by
\[
  \Phi^\lambda_{st}=\phi^\lambda(C_s,C_t)
\]
for $s,t\in M(\lambda)$. In fact $\phi_a^\lambda$ can be related to $\phi^\lambda$ and $\rho^\lambda$ using (C3) of Definition 
\ref{gus}. More precisely,
\begin{equation}\label{phiarho}
  \phi_a^\lambda(C_s,C_t)
    =\sum_{t'\in M(\lambda)}\Phi^\lambda_{st'}\rho^\lambda(a)_{t't}
\end{equation}
for $a\in A$ and $s,t\in M(\lambda)$. The importance of $\phi^\lambda$ is demonstrated by the following theorem.
\begin{thm}[\cite{gus} Theorem 3.8]\label{gussemis}
In the above notation, if $R$ is a field then the following are equivalent.
\begin{enumerate}
\item The algebra $A$ is semisimple.
\item The nonzero cell representations $W(\lambda)$ are irreducible and pairwise inequivalent.
\item The form $\phi^\lambda$ is nondegenerate (ie $\det\Phi^\lambda\neq0$) for each $\lambda\in\Lambda$.
\end{enumerate}
\end{thm}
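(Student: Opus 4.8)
The plan is to follow Graham and Lehrer: build the irreducible $A$-modules out of the cell representations and their bilinear forms, show that the composition factors of each $W(\lambda)$ are governed by the poset $\Lambda$, and then read semisimplicity off the non-degeneracy of the forms. Throughout $R$ is a field. First I would record the two structural properties of $\phi^\lambda$ that do the real work. Applying $*$ to the defining relation (\ref{tst's'}) and comparing with the same relation for $a^*$ (using (C2), which makes $A(<\lambda)$ stable under $*$) gives $\phi_a^\lambda(C_s,C_t)=\phi_{a^*}^\lambda(C_t,C_s)$; the case $a=1$ is the symmetry of $\phi^\lambda$, and combined with (\ref{phiarho}) it yields the \emph{contravariance} $\phi^\lambda(a^*x,y)=\phi^\lambda(x,ay)$ for all $a\in A$. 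Contravariance forces the radical $\operatorname{rad}\phi^\lambda=\{x\in W(\lambda)\mid\phi^\lambda(x,y)=0\text{ for all }y\}$ to be an $A$-submodule, so $L(\lambda):=W(\lambda)/\operatorname{rad}\phi^\lambda$ is an $A$-module carrying an induced non-degenerate form. Note that $\operatorname{rad}\phi^\lambda=W(\lambda)$ exactly when $\Phi^\lambda=0$, while $\operatorname{rad}\phi^\lambda=0$ iff $\det\Phi^\lambda\neq0$.

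The crux is the key lemma of \cite{gus}: if $\phi^\lambda\neq0$ then $L(\lambda)$ is absolutely irreducible. For this, take $x\in W(\lambda)\setminus\operatorname{rad}\phi^\lambda$ and $t\in M(\lambda)$ with $\phi^\lambda(x,C_t)\neq0$. Putting $a=1$ in (\ref{tst's'}) gives the product rule $C^\lambda_{ab}C^\lambda_{cd}\in\phi^\lambda(C_b,C_c)C^\lambda_{ad}+A(<\lambda)$, so acting by the algebra element $C^\lambda_{s't}$ sends $C_c\mapsto\phi^\lambda(C_t,C_c)C_{s'}$ (recall $A(<\lambda)$ annihilates $W(\lambda)$), hence sends $x\mapsto\phi^\lambda(x,C_t)C_{s'}$. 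Thus every $C_{s'}$ lies in the submodule generated by $x$, so that submodule is all of $W(\lambda)$ modulo $\operatorname{rad}\phi^\lambda$; this proves $L(\lambda)$ simple, and the same computation pins its endomorphism algebra down to $R$. I would then establish the triangularity of composition factors: $L(\mu)$ occurs in $W(\lambda)$ only if $\mu\leq\lambda$ and $\phi^\mu\neq0$, with $L(\lambda)$ occurring exactly once when $\phi^\lambda\neq0$. Since the left regular module has a filtration (the cell chain of $A$ by the ideals $A(\leq\lambda)$) whose sections are direct sums of copies of the cell modules, every simple $A$-module is a factor of some $W(\lambda)$; so the $L(\mu)$ with $\phi^\mu\neq0$ are a complete set of pairwise non-isomorphic irreducibles.

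With these facts the three implications are short. For (iii)$\Rightarrow$(ii): non-degeneracy of every $\phi^\lambda$ gives $\operatorname{rad}\phi^\lambda=0$, so each $W(\lambda)=L(\lambda)$ is nonzero and irreducible, and the previous paragraph makes them inequivalent. For (ii)$\Rightarrow$(iii): each $W(\lambda)$ is nonzero since $M(\lambda)\neq\emptyset$; if some $\phi^\lambda=0$ then all composition factors of $W(\lambda)$ are $L(\mu)$ with $\mu<\lambda$, so irreducibility forces $W(\lambda)\cong L(\mu)\cong W(\mu)$ for some $\mu<\lambda$, contradicting pairwise inequivalence, whence every $\phi^\lambda\neq0$ and irreducibility then gives $\operatorname{rad}\phi^\lambda=0$. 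For (i)$\Leftrightarrow$(iii): writing $d_{\lambda\mu}=[W(\lambda):L(\mu)]$, triangularity gives $d_{\lambda\lambda}=1$ and a $W$-filtration of each projective cover $P(\mu)$ with $[P(\mu):W(\lambda)]=d_{\lambda\mu}$; comparing $\dim_R A=\sum_\lambda(\dim_R W(\lambda))^2$ (from (C1)) with the Wedderburn count shows $A$ is semisimple iff $P(\mu)=L(\mu)$ for all $\mu$, iff $d_{\lambda\mu}=\delta_{\lambda\mu}$ and $\dim W(\lambda)=\dim L(\lambda)$ throughout, iff $\operatorname{rad}\phi^\lambda=0$ for every $\lambda$.

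I expect the main obstacle to be the structural lemma of the second paragraph — proving that each $L(\lambda)$ with $\phi^\lambda\neq0$ is absolutely irreducible, and that these modules are distinct and exhaustive — together with the unitriangularity of the decomposition matrix. Once the classification of the simple $A$-modules is in hand, the equivalences themselves are formal; the remaining work is bookkeeping with the cell chain of $A$ and careful, repeated use of the contravariance of $\phi^\lambda$.
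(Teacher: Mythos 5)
The paper offers no proof of this statement: it is quoted verbatim, with attribution, as Theorem 3.8 of Graham and Lehrer \cite{gus}, and is used downstream only as a black box (e.g.\ in Theorem \ref{specialcase} and Corollary \ref{Phi}). So the only meaningful comparison is with Graham and Lehrer's original argument, and your sketch is essentially that argument: the identity $\phi_a^\lambda(C_s,C_t)=\phi_{a^*}^\lambda(C_t,C_s)$ and the resulting contravariance $\phi^\lambda(a^*x,y)=\phi^\lambda(x,ay)$, the quotients $L(\lambda)=W(\lambda)/\operatorname{rad}\phi^\lambda$, the absolute irreducibility of $L(\lambda)$ when $\phi^\lambda\neq0$ via the product rule $C^\lambda_{ab}C^\lambda_{cd}\equiv\phi^\lambda(C_b,C_c)C^\lambda_{ad}$ modulo $A(<\lambda)$, the classification of the simple modules, the triangularity of the decomposition matrix, and the comparison of $\dim_RA=\sum_\lambda(\dim_RW(\lambda))^2$ with the Wedderburn decomposition. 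All of this is correct in outline and matches \cite{gus}, Sections 2--3.

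One concrete slip worth fixing: with the convention of Definition \ref{gus}, where $A(<\lambda)$ is spanned by the $C^\mu_{s''t''}$ with $\mu<\lambda$ and is a two-sided ideal annihilating $W(\lambda)$, the triangularity points the other way from what you wrote. If $L(\mu)$ is a composition factor of $W(\lambda)$, then some $C^\mu_{uv}$ must act nontrivially on $W(\lambda)$, while $C^\mu_{uv}C^\lambda_{st}$ lies in $A(\leq\mu)$; for a $C^\lambda$-term to survive modulo $A(<\lambda)$ one needs $\lambda\leq\mu$, not $\mu\leq\lambda$. Correspondingly, when $\phi^\lambda=0$ the composition factors of $W(\lambda)$ are the $L(\mu)$ with $\lambda<\mu$. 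Since you use the reversed order consistently, the logical structure of (ii)$\Leftrightarrow$(iii) and of the reciprocity count for (i)$\Leftrightarrow$(iii) survives after flipping every inequality; but as written, the annihilator argument you would actually have to run proves the opposite of the triangularity you assert, so the inequalities should be reversed throughout.
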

Theorem \ref{main} allows us to obtain cell representations of $R^\alpha[S]$ from the cell representations of the twisted 
group algebras $R^\beta[G_D]$. In fact this is a special case of the following general result, the proof of which is a 
consequence of Theorem \ref{known} and Green's Lemma, and is omitted (see also Theorem 2.3 of \cite{McAlister}).
\begin{prop}
Suppose that $S$ is any group bound semigroup and that $D$ is a regular $\DD$ class in $S$ with maximal subgroup $G$. Suppose 
$\alpha$ is a twisting from $S$ into $R$, and
\[
  \beta:L_D\times K_D\rightarrow G(R)
\]
is a map satisfying (\ref{beta1}) and (\ref{beta2}), where $L_D$ is the $\LL$ class of $G$ and $K_D$ is the 
$\RR$ class of $G$. Suppose that $M$ is a left $R^\beta[G]$-module. For each $K\in\Rcl_D$, pick an element $v_K\in K$ in the 
same $\LL$ class as $G$, and let
\[
  M_K=\{m_K\mid m\in M\}
\]
be a set in bijection with $M$. Then
\[
  W=\bigoplus_{K\in\Rcl_D}M_K
\]
is a left $R^\alpha[S]$-module under the action which is defined on $S$ by
\[
  s\cdot m_K=\begin{cases}
    0&\text{if }sv_K<_\DD D\\
    \alpha(s,v_K)\beta(v_{K'},g)^{-1}(gm)_{K'}&\text{if }sv_K=v_{K'}g\text{ where }g\in G
  \end{cases}
\]
for $m\in M$, $K\in\Rcl_D$ and $s\in S$, and which is extended to $R^\alpha[S]$ by $R$-linearity.
\end{prop}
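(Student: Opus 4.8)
The plan is to verify directly that the displayed formula defines a left $R^\alpha[S]$-module structure on $W$. Since the action is extended by $R$-linearity and, for basis elements $x,y\in S$, the algebra product is $x\cdot_\alpha y=\alpha(x,y)(xy)$, it suffices to establish two things: first, that the action of each $s\in S$ on each summand $M_K$ is well defined and $R$-linear in $m$; and second, the compatibility $x\cdot(y\cdot w)=\alpha(x,y)\,(xy)\cdot w$ for all $x,y\in S$ and $w\in W$, which by linearity may be checked on $w=m_K$. Throughout, write $e$ for the identity of $G$, so that $G=L_D\cap K_D$, $e$ lies in both $L_D$ and $K_D$, and each chosen $v_K$ lies in $L_D$.

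For well-definedness, note that $sv_K\leq_\DD D$, since $sv_K$ is obtained from $v_K\in D$ by left multiplication and $\DD=\JJ$ by (i) of Theorem \ref{known}. If $sv_K<_\DD D$ the action is $0$, so suppose $sv_K\in D$. Then (iii) of Theorem \ref{known}, applied to the product $s\cdot v_K$, gives $v_K\;\LL\;sv_K$, so $sv_K\in L_D$; let $K'$ be its $\RR$ class, so $sv_K\in K'\cap L_D$. As in the proof of Lemma \ref{C1lem}, the dual of Green's Lemma (using $v_{K'}e=v_{K'}$) shows that left multiplication by $v_{K'}$ is an $\LL$ class preserving bijection from the $\RR$ class $K_D$ of $e$ onto $K'$; restricting to $G=K_D\cap L_D$ yields a bijection $G\to K'\cap L_D$, $g\mapsto v_{K'}g$. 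Hence there is a unique $g\in G$ with $sv_K=v_{K'}g$, so $K'$ and $g$ are determined by $s$ and $K$ alone. Linearity in $m$ is then immediate, since the scalar factors and the indices $K',g$ do not depend on $m$.

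For the compatibility, fix $x,y\in S$, $K\in\Rcl_D$ and $m\in M$, and use $(xy)v_K=x(yv_K)$. If $yv_K<_\DD D$ then $(xy)v_K\leq_\DD yv_K<_\DD D$ and both sides vanish; likewise if $yv_K=v_{K'}g\in D$ but $xv_{K'}<_\DD D$, then $(xy)v_K=(xv_{K'})g<_\DD D$ and both sides vanish. In the remaining case write $yv_K=v_{K'}g$ and $xv_{K'}=v_{K''}h$ with $g,h\in G$; then $(xy)v_K=v_{K''}(hg)$ with $hg\in G$ by uniqueness. Using that $M$ is an $R^\beta[G]$-module, so that $h\cdot(g\cdot m)=\beta(h,g)\,(hg)m$, expanding $x\cdot(y\cdot m_K)$ and $\alpha(x,y)\,(xy)\cdot m_K$ and comparing the coefficients of $((hg)m)_{K''}$ reduces the claim to the scalar identity
\[
  \alpha(y,v_K)\alpha(x,v_{K'})\beta(h,g)\beta(v_{K''},hg)
    =\alpha(x,y)\alpha(xy,v_K)\beta(v_{K'},g)\beta(v_{K''},h).
\]

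This identity is where the twisting hypotheses do their work. Applying (\ref{beta1}) with arguments $v_{K''},h,g$ and using $v_{K''}h=xv_{K'}$ gives $\beta(v_{K''},hg)\beta(h,g)=\beta(v_{K''},h)\beta(xv_{K'},g)$; cancelling the unit $\beta(v_{K''},h)$ reduces the identity to $\alpha(y,v_K)\alpha(x,v_{K'})\beta(xv_{K'},g)=\alpha(x,y)\alpha(xy,v_K)\beta(v_{K'},g)$. Applying (\ref{beta2}) with arguments $x,v_{K'},g$ and using $v_{K'}g=yv_K$ gives $\alpha(x,v_{K'})\beta(xv_{K'},g)=\alpha(x,yv_K)\beta(v_{K'},g)$; cancelling $\beta(v_{K'},g)$ leaves $\alpha(y,v_K)\alpha(x,yv_K)=\alpha(x,y)\alpha(xy,v_K)$, which is exactly (\ref{twistassoc}) with $z=v_K$. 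I expect the main obstacle to be bookkeeping rather than anything conceptual: one must track the inverse $\beta$ factors and, more importantly, check at each step that all arguments of $\beta$ lie in $L_D\times K_D$ (for instance that $xv_{K'}=v_{K''}h\in L_D$ in the main case, again by (iii) of Theorem \ref{known}), so that (\ref{beta1}) and (\ref{beta2}) may legitimately be invoked. The well-definedness argument via Green's Lemma is the one genuinely structural point.
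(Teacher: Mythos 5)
Your proof is correct, and the paper in fact omits the proof of this proposition entirely, saying only that it is ``a consequence of Theorem 2 and Green's Lemma''; your verification supplies exactly that argument. The well-definedness step via the dual of Green's Lemma (giving the unique factorisation $sv_K=v_{K'}g$ with $g\in G$), the three-case check of $x\cdot(y\cdot m_K)=\alpha(x,y)(xy)\cdot m_K$, and the reduction of the scalar identity to (\ref{beta1}), (\ref{beta2}) and (\ref{twistassoc}) all check out, including the domain conditions $xv_{K'}\in L_D$ and $g,hg\in K_D$ needed for the $\beta$ values to be defined.
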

Now suppose that the assumptions of Theorem \ref{main} hold. By analogy with Section 4 of \cite{james}, we determine 
the bilinear forms associated with the cell representations of $R^\alpha[S]$ in terms of the cell representations of 
$R^\beta[G_D]$. For any $L,K\in\Lcl_D$ it follows from (ii) and (iii) of Theorem \ref{known} that the element $u_L^{}u_K^*$ is 
either in $G_D$ or in a lower $\DD$ class than $D$. We can therefore define the matrix 
$P_D^\alpha\in\mat_{\Lcl_D}(R^\beta[G_D])$ by
\[
  (P_D^\alpha)_{LK}=\begin{cases}
    0&\text{if }u_L^{}u_K^*<_\DD D\\
    \alpha(u_L^{},u_K^*)u_L^{}u_K^*&\text{if }u_L^{}u_K^*\in G_D.
  \end{cases}
\]
Call $P_D^\alpha$ the \emph{twisted sandwich matrix} of $D$. Of course when $\alpha$ is trivial, this reduces to the usual sandwich 
matrix, on identifying $G_D\cup\{0\}$ with a subset of $R[G_D]$. We can now state the analogue of Lemma 16 of \cite{james}.
\begin{lem}\label{phi}
Let $(D,\lambda)\in\Lambda$ and $(L,s),(K,t)\in M(D,\lambda)$. Then
\[
  \phi^{(D,\lambda)}\left(C_{(L,s)},C_{(K,t)}\right)
    =\phi_{(P_D^\alpha)_{LK}}^\lambda\left(C_s,C_t\right).
\]
\end{lem}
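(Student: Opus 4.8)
The plan is to unwind the definition of $\phi^{(D,\lambda)}$ in (\ref{tst's'}): for arbitrary $(L',s'),(K',t')\in M(D,\lambda)$ I would compute the product $C^{(D,\lambda)}_{(L',s')(L,s)}\cdot C^{(D,\lambda)}_{(K,t)(K',t')}$ modulo $R^\alpha[S](<(D,\lambda))$ and read off the coefficient of $C^{(D,\lambda)}_{(L',s')(K',t')}$, which by the uniqueness in (\ref{tst's'}) is $\phi^{(D,\lambda)}(C_{(L,s)},C_{(K,t)})$. Writing the two basis elements as $(u_{L'}^*\circ C^\lambda_{s's})\circ u_L$ and $(u_K^*\circ C^\lambda_{tt'})\circ u_{K'}$, the whole calculation hinges on the middle factor $u_Lu_K^*$, which by parts (ii) and (iii) of Theorem \ref{known} lies either in $G_D$ or in a strictly lower $\DD$ class. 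These two alternatives are exactly the two cases in the definition of $(P_D^\alpha)_{LK}$.

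In the case $u_Lu_K^*<_\DD D$, every term of the product has the form $(\text{scalar})\cdot u_{L'}^*gu_Lu_K^*hu_{K'}$ with $g,h\in G_D$, and since $u_{L'}^*g(u_Lu_K^*)hu_{K'}\leq_\DD u_Lu_K^*<_\DD D$, the product lies in $\bigoplus_{D'<_\DD D}R^\alpha[D']\subseteq R^\alpha[S](<(D,\lambda))$ by (\ref{D'}). Hence $\phi^{(D,\lambda)}(C_{(L,s)},C_{(K,t)})=0$, which matches $\phi^\lambda_{(P_D^\alpha)_{LK}}(C_s,C_t)=\phi^\lambda_0(C_s,C_t)=0$ via (\ref{phiarho}) with $a=0$.

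The main work is the case $u_Lu_K^*\in G_D$, where $(P_D^\alpha)_{LK}=\alpha(u_L,u_K^*)(u_Lu_K^*)$ is an invertible multiple of a group element. Here I would transfer the full product $\cdot$ onto the single factor $u_Lu_K^*$ using the mixed associativity identities (\ref{beta2'}) and (\ref{beta4'}), and then collapse the remaining products with associativity (\ref{beta1'}), to obtain
\[
C^{(D,\lambda)}_{(L',s')(L,s)}\cdot C^{(D,\lambda)}_{(K,t)(K',t')}
=u_{L'}^*\circ\Big(C^\lambda_{s's}\circ(P_D^\alpha)_{LK}\circ C^\lambda_{tt'}\Big)\circ u_{K'}.
\]
The inner bracket is now a product entirely within $R^\beta[G_D]$, so the group-algebra instance of (\ref{tst's'}) with $a=(P_D^\alpha)_{LK}$ gives
\[
C^\lambda_{s's}\circ(P_D^\alpha)_{LK}\circ C^\lambda_{tt'}\in\phi^\lambda_{(P_D^\alpha)_{LK}}(C_s,C_t)\,C^\lambda_{s't'}+R^\beta[G_D](<\lambda).
\]
Applying the $R$-module maps $u_{L'}^*\circ(-)$ and $(-)\circ u_{K'}$ from the proof of Lemma \ref{C1lem}, which send each $C^\mu_{uv}$ to $C^{(D,\mu)}_{(L',u)(K',v)}$ and so carry $R^\beta[G_D](<\lambda)$ into $R^\alpha[S](<(D,\lambda))$, converts this into the required membership with coefficient $\phi^\lambda_{(P_D^\alpha)_{LK}}(C_s,C_t)$. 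Since this coefficient does not depend on $(L',s'),(K',t')$, comparison with (\ref{tst's'}) yields the claim.

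The hard part, and the step I would be most careful about, is verifying the support hypotheses needed to apply (\ref{beta2'}) and (\ref{beta4'}). To move the product inward I first need $u_L\cdot(K^*\cap K')\subseteq L_D^*$: this holds because $u_Lu_K^*\;\LL\;u_K^*$ by part (iii) of Theorem \ref{known}, so the dual of Green's Lemma makes left multiplication by $u_L$ a bijection (preserving $\LL$ classes) from the $\RR$ class $K^*$ of $u_K^*$ onto the $\RR$ class $L_D^*$ of $u_Lu_K^*$, sending all of $K^*$ into $L_D^*$. I then need $\{u_Lu_K^*\}\subseteq L_D$, which is immediate from $u_Lu_K^*\in G_D\subseteq L_D$. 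Only with these inclusions in place are (\ref{beta2'}) and (\ref{beta4'}) legitimately applicable; granting them, the displayed collapse is routine, and well-definedness of the intermediate $\circ$ products is checked as elsewhere in the paper.
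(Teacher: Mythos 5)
Your proposal is correct and follows essentially the same route as the paper: split into the cases $u_L^{}u_K^*<_\DD D$ (where (\ref{D'}) kills the product) and $u_L^{}u_K^*\in G_D$ (where $(P_D^\alpha)_{LK}=u_L^{}\cdot u_K^*$ and the identities (\ref{beta1'}), (\ref{beta2'}), (\ref{beta4'}) reduce everything to the defining relation of $\phi^\lambda_{(P_D^\alpha)_{LK}}$ in $R^\beta[G_D]$, transported back via $u_{L'}^*\circ(-)\circ u_{K'}$). The only cosmetic differences are that the paper specialises the outer indices to $(K,t)$ and $(L,s)$ rather than keeping them general, and assembles the product from the inside out rather than collapsing it from the outside in; your verification of the support hypotheses via Theorem \ref{known}(iii) and the dual of Green's Lemma is exactly the check the paper leaves implicit.
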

\begin{proof}
Suppose first that $u_L^{}u_K^*<_\DD D$, so $(P_D^\alpha)_{LK}=0$. Then (\ref{D'}) gives
\[
  u_K^*gu_L^{}u_K^*hu_L^{}\in R^\alpha[S](<(D,\lambda))
\]
for $g,\,h\in G_D$. Multiplying by
\[
  \alpha(u_K^*gu_L^{},u_K^*hu_L^{})c^\lambda_{ts}(g)\beta(u_K^*,g)\beta(u_K^*g,u_L^{})c^\lambda_{ts}(h)\beta(u_K^*,h)\beta(u_K^*h,u_L^{})
\]
and summing over $g$ and $h$, we obtain
\[
  C^{(D,\lambda)}_{(K,t)(L,s)}\cdot C^{(D,\lambda)}_{(K,t)(L,s)}
    \in R^\alpha[S](<(D,\lambda)).
\]
Thus
\[
  \phi^{(D,\lambda)}\left(C_{(L,s)},C_{(K,t)}\right)=0=\phi_{(P_D^\alpha)_{LK}}^\lambda\left(C_s,C_t\right)
\]
in this case. The other case is when $u_L^{}u_K^*\in G_D$. Then $(P_D^\alpha)_{LK}=u_L^{}\cdot u_K^*$, so (\ref{beta2'}) gives
\[
  (P_D^\alpha)_{LK}\circ C^\lambda_{ts}=\left(u_L^{}\cdot u_K^*\right)\circ C^\lambda_{ts}
    =u_L^{}\cdot\left(u_K^*\circ C^\lambda_{ts}\right).
\]
Now $u_L^{}u_K^*g\in G_D\subseteq L_D^*$ for all $g\in G_D$. Thus applying ${}\circ u_L^{}$ on the right,
\begin{eqnarray*}
  \left((P_D^\alpha)_{LK}\circ C^\lambda_{ts}\right)\circ u_L^{}
    &=&\left(u_L^{}\cdot\left(u_K^*\circ C^\lambda_{ts}\right)\right)\circ u_L^{}\\
    &=&u_L^{}\cdot\left(\left(u_K^*\circ C^\lambda_{ts}\right)\circ u_L^{}\right)\text{ by (\ref{beta2'})}\\
    &=&u_L^{}\cdot C^{(D,\lambda)}_{(K,t)(L,s)}.
\end{eqnarray*}
Because $u_L^{}u_K^*g\in G_D$, it follows that $u_L^{}u_K^*gu_L^{}\in L_D^*$ as in the proof of Lemma \ref{C1lem}. Thus applying
$\left(u_K^*\circ C^\lambda_{ts}\right)\circ{}$ on the left gives
\begin{eqnarray}
  \left(u_K^*\circ C^\lambda_{ts}\right)\circ\left(\left((P_D^\alpha)_{LK}\circ C^\lambda_{ts}\right)\circ u_L^{}\right)\label{LHS}
    \hspace{-40mm}\\
    &=&\left(u_K^*\circ C^\lambda_{ts}\right)\circ\left(u_L^{}\cdot C^{(D,\lambda)}_{(K,t)(L,s)}\right)\nonumber\\
    &=&\left(\left(u_K^*\circ C^\lambda_{ts}\right)\circ u_L^{}\right)\cdot C^{(D,\lambda)}_{(K,t)(L,s)}\text{ by (\ref{beta4'})}\nonumber\\
    &=&C^{(D,\lambda)}_{(K,t)(L,s)}\cdot C^{(D,\lambda)}_{(K,t)(L,s)}\nonumber.
\end{eqnarray}
Now by definition of $\phi^\lambda$, we have
\[
  C^\lambda_{ts}\circ(P_D^\alpha)_{LK}\circ C^\lambda_{ts}
    \in\phi_{(P_D^\alpha)_{LK}}^\lambda(C_s,C_t)C^\lambda_{ts}+R^\beta[G_D](<\lambda).
\]
As in the proof of Lemma \ref{C3lem}, applying $u_K^*\circ{}$ on the left and ${}\circ u_L$ gives
\begin{eqnarray*}
  \left(u_K^*\circ\left(C^\lambda_{ts}\circ(P_D^\alpha)_{LK}\circ C^\lambda_{ts}\right)\right)\circ u_L^{}
    &\in&\phi_{(P_D^\alpha)_{LK}}^\lambda(C_s,C_t)C^{(D,\lambda)}_{(K,t)(L,s)}\\
    &&\hspace{10mm}+R^\alpha[S](<(D,\lambda)).
\end{eqnarray*}
By applying (\ref{beta1'}) repeatedly, the left hand side is exactly (\ref{LHS}). Therefore
\[
  C^{(D,\lambda)}_{(K,t)(L,s)}\cdot C^{(D,\lambda)}_{(K,t)(L,s)}
    \in\phi_{(P_D^\alpha)_{LK}}^\lambda(C_s,C_t)C^{(D,\lambda)}_{(K,t)(L,s)}+R^\alpha[S](<(D,\lambda)),
\]
whence the result.
\end{proof}
For each $\lambda\in\Lambda_D$, the representation
\[
  \rho^\lambda:R^\beta[G_D]\rightarrow\mat_{M_D(\lambda)}(R)
\]
naturally induces a homomorphism
\[
  \mat_{\Lcl_D}(R^\beta[G_D])\rightarrow\mat_{\Lcl_D}\left(\mat_{M_D(\lambda)}(R)\right)
    \cong\mat_{M(D,\lambda)}(R),
\]
which we also denote by $\rho^\lambda$. 
\begin{cor}\label{Phi}
The matrix representation of $\phi^{(D,\lambda)}$ is given by
\[
  \Phi^{(D,\lambda)}=\Phi'^\lambda\rho^\lambda(P^\alpha_D),
\]
where $\Phi'^\lambda$ is the block diagonal matrix
\[
  \Phi'^\lambda=\begin{pmatrix}
    \Phi^\lambda&0&0&\cdots&0\\
    0&\Phi^\lambda&0&\cdots&0\\
    0&0&\Phi^\lambda&\cdots&0\\
    \vdots&\vdots&\vdots&\ddots&\vdots\\
    0&0&0&\cdots&\Phi^\lambda
  \end{pmatrix}
  \in\mat_{\Lcl_D}\left(\mat_{M_D(\lambda)}(R)\right)
    \cong\mat_{M(D,\lambda)}(R).
\]
Thus
\[
  \det\Phi^{(D,\lambda)}=\left(\det\Phi^\lambda\right)^{|\Lcl_D|}\det\rho^\lambda(P^\alpha_D).
\]
\end{cor}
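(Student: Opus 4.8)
The plan is to compute $\Phi^{(D,\lambda)}$ entry by entry and then recognise the outcome as a block matrix product. Throughout I regard $\Phi^{(D,\lambda)}\in\mat_{M(D,\lambda)}(R)$ as a matrix of $|\Lcl_D|\times|\Lcl_D|$ blocks indexed by $\Lcl_D\times\Lcl_D$, each block lying in $\mat_{M_D(\lambda)}(R)$, via the identification $\mat_{M(D,\lambda)}(R)\cong\mat_{\Lcl_D}(\mat_{M_D(\lambda)}(R))$ already fixed in the text. With this convention, the $(L,K)$ block of $\Phi^{(D,\lambda)}$ has $(s,t)$ entry $\Phi^{(D,\lambda)}_{(L,s)(K,t)}=\phi^{(D,\lambda)}(C_{(L,s)},C_{(K,t)})$.

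First I would apply Lemma \ref{phi} to rewrite this entry as $\phi^\lambda_{(P_D^\alpha)_{LK}}(C_s,C_t)$, trading the form on the large cell module for the twisted form $\phi^\lambda_a$ on the cell module of $R^\beta[G_D]$ with ring element $a=(P_D^\alpha)_{LK}$. Next I would invoke the general identity (\ref{phiarho}) to expand $\phi^\lambda_a$ through $\Phi^\lambda$ and $\rho^\lambda$, obtaining
\[
  \Phi^{(D,\lambda)}_{(L,s)(K,t)}=\sum_{t'\in M_D(\lambda)}\Phi^\lambda_{st'}\,\rho^\lambda\big((P_D^\alpha)_{LK}\big)_{t't}.
\]
The right-hand side is precisely the $(s,t)$ entry of the matrix product $\Phi^\lambda\rho^\lambda((P_D^\alpha)_{LK})$ in $\mat_{M_D(\lambda)}(R)$. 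Since $\Phi'^\lambda$ is block diagonal with each diagonal block equal to $\Phi^\lambda$, its $(L,K)$ block is $\Phi^\lambda$ when $L=K$ and $0$ otherwise, so the $(L,K)$ block of $\Phi'^\lambda\rho^\lambda(P_D^\alpha)$ collapses to $\Phi^\lambda\rho^\lambda((P_D^\alpha)_{LK})$. Matching block by block yields the asserted identity $\Phi^{(D,\lambda)}=\Phi'^\lambda\rho^\lambda(P_D^\alpha)$.

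The determinant statement then drops out from multiplicativity of $\det$ together with the block diagonal shape of $\Phi'^\lambda$: one has $\det\Phi'^\lambda=(\det\Phi^\lambda)^{|\Lcl_D|}$, whence $\det\Phi^{(D,\lambda)}=(\det\Phi^\lambda)^{|\Lcl_D|}\det\rho^\lambda(P_D^\alpha)$. I expect the only real difficulty to be bookkeeping with the two-level index $(L,s)$: one must read the sum over $t'$ in (\ref{phiarho}) as ordinary multiplication inside a single diagonal block while the block coordinate $(L,K)$ is carried passively by the sandwich matrix. No extra ingredient is required, as (\ref{phiarho}) is valid in any cellular algebra and Lemma \ref{phi} supplies exactly the needed translation between the two bilinear forms.
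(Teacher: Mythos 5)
Your proposal is correct and follows essentially the same route as the paper: both reduce each entry of $\Phi^{(D,\lambda)}$ via Lemma \ref{phi} to $\phi^\lambda_{(P_D^\alpha)_{LK}}(C_s,C_t)$, expand with (\ref{phiarho}), and identify the result as the $(L,K)$ block of $\Phi'^\lambda\rho^\lambda(P_D^\alpha)$, with the determinant formula following from multiplicativity. The only difference is cosmetic: you organise the computation block-by-block, while the paper writes the same sum over the full index set $M(D,\lambda)$ using a Kronecker delta.
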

\begin{proof}
Using (\ref{phiarho}), the previous Lemma gives
\begin{eqnarray*}
  \Phi^{(D,\lambda)}_{(L,s)(K,t)}
    &=&\phi^{(D,\lambda)}\left(C_{(L,s)},C_{(K,t)}\right)\\
    &=&\phi_{(P_D^\alpha)_{LK}}^\lambda(C_s,C_t)\\
    &=&\sum_{t'\in M_D(\lambda)}\Phi^\lambda_{st'}\rho^\lambda\left((P_D^\alpha)_{LK}\right)_{t't}\\
    &=&\sum_{t'\in M_D(\lambda)}\Phi^\lambda_{st'}\rho^\lambda\left(P_D^\alpha\right)_{(L,t')(K,t)}\\
    &=&\sum_{(L',t')\in M(D,\lambda)}\Phi^\lambda_{st'}\delta_{LL'}
      \rho^\lambda\left(P_D^\alpha\right)_{(L',t')(K,t)}\\
    &=&\sum_{(L',t')\in M(D,\lambda)}\Phi'^\lambda_{(L,s)(L',t')}
      \rho^\lambda(P_D^\alpha)_{(L',t')(K,t)}\\
    &=&\left(\Phi'^\lambda\rho^\lambda(P_D^\alpha)\right)_{(L,s)(K,t)}.
\end{eqnarray*}
Hence
\[
  \Phi^{(D,\lambda)}=\Phi'^\lambda\rho^\lambda(P_D^\alpha)
\]
as required. Taking the determinant, it is then clear that
\[
  \det\Phi^{(D,\lambda)}=\det\Phi'^\lambda\det\rho^\lambda(P_D^\alpha)
    =\left(\det\Phi^\lambda\right)^{|\Lcl_D|}\det\rho^\lambda(P^\alpha_D).
\]
This completes the proof of Corollary \ref{Phi}.
\end{proof}
The utility of cellular machinery will be illustrated by providing an alternative proof of a special case (Theorem 
\ref{specialcase} below) of the following difficult theorem.
\begin{thm}
Suppose that $S$ is a finite regular semigroup, and suppose $\alpha$ is a twisting from $S$ into some field $R$ such that 
$\alpha(x,y)\neq0$ for each $x,y\in S$. Consider a $\DD$ class $D$ in $S$, and choose any idempotent $1_D\in D$. The $\HH$ class 
$G_D$ of $1_D$ is a group. For each $L\in\Lcl_D$, pick an element $u_L\in L$ with $u_L\;\RR\;1_D$. Similarly for $K\in\Rcl_D$, 
pick $v_K\in K$ with $v_K\;\LL\;1_D$. The twisted sandwich matrix $P_D^\alpha$ is the $\Lcl_D\times\Rcl_D$ matrix with entries in 
$R^\alpha[G_D]$ given by
\[
  (P_D^\alpha)_{LK}=\begin{cases}
    0&\text{if }u_Lv_K<_\DD D\\
    \alpha(u_L,v_K)u_Lv_K&\text{if }u_Lv_K\in G_D.
  \end{cases}
\]
Then $R^\alpha[S]$ is semisimple exactly when the following two conditions hold for each $\DD$ class $D$.
\begin{enumerate}
\item $R^\alpha[G_D]$ is semisimple.
\item $P_D^\alpha$ is square and invertible.
\end{enumerate}
\end{thm}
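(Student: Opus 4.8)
The plan is to read this as the twisted analogue of the Munn--Ponizovskii semisimplicity criterion for finite semigroup algebras, and to prove it by reducing semisimplicity of $R^\alpha[S]$ to separate conditions on each $\DD$-class $D$: one governing the maximal subgroup and one governing the sandwich matrix. Wherever a compatible anti-involution $*$ is available I would extract these conditions directly from the cellular machinery above; the general statement, which assumes no such $*$, I would then obtain from the classical structure theory, with the subgroup/sandwich-matrix split surfacing in both approaches.

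Suppose first that $*$ is present. Since $R$ is a field and every $\alpha(x,y)$ is a unit, Corollary~\ref{mainunits} makes $R^\alpha[S]$ cellular with the cell datum $(\Lambda,M,C,*)$ of Theorem~\ref{main}, where $\Lambda=\{(D,\lambda)\}$. By Theorem~\ref{gussemis}, $R^\alpha[S]$ is semisimple precisely when $\det\Phi^{(D,\lambda)}\neq0$ for every $(D,\lambda)$, and Corollary~\ref{Phi} factors each Gram determinant as $\det\Phi^{(D,\lambda)}=(\det\Phi^\lambda)^{|\Lcl_D|}\det\rho^\lambda(P_D^\alpha)$. For fixed $D$ this product is nonzero for all $\lambda\in\Lambda_D$ exactly when $\det\Phi^\lambda\neq0$ for all $\lambda$ and $\det\rho^\lambda(P_D^\alpha)\neq0$ for all $\lambda$. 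Applying Theorem~\ref{gussemis} to the group algebra $R^\beta[G_D]=R^\alpha[G_D]$ identifies the first condition with the semisimplicity of $R^\alpha[G_D]$, which is (i); it remains to interpret the vanishing of the second family.

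For that interpretation I would use that, once $R^\alpha[G_D]$ is semisimple, the $\rho^\lambda$ are exactly the representations afforded by its simple modules, and a square matrix over a semisimple algebra is invertible if and only if its image in each simple factor is invertible. Since those factor actions are faithful, invertibility there is detected by $\det\rho^\lambda(P_D^\alpha)\neq0$; hence the second family of nonvanishing conditions is equivalent to the invertibility of $P_D^\alpha$, which is (ii). In this cellular incarnation $P_D^\alpha$ is the square matrix indexed by $\Lcl_D\times\Lcl_D$, because the anti-involution already supplies the bijection $L\mapsto L^*$ between $\Lcl_D$ and $\Rcl_D$; thus squareness is automatic here and only invertibility is at stake. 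This is exactly why the cellular route yields a clean proof precisely in the anti-involution case.

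The main obstacle is that the theorem as stated has no anti-involution, so the argument above establishes only the special case, and in full generality the squareness $|\Lcl_D|=|\Rcl_D|$ is a genuine condition rather than a consequence of $*$. To close the gap I would fall back on the classical route: using $\JJ=\DD$ from Theorem~\ref{known}(i), filter $R^\alpha[S]$ by the ideals spanned by downward-closed unions of $\DD$-classes, so that the successive quotients are the twisted algebras of the principal factors; for a regular class $D$ this quotient is a twisted Rees matrix (Munn) algebra assembled from $R^\alpha[G_D]$ and the rectangular sandwich matrix $P_D^\alpha$. One then shows, as in the untwisted theory, that such a Munn algebra is semisimple iff $R^\alpha[G_D]$ is semisimple and $P_D^\alpha$ is square and invertible, and finally that $R^\alpha[S]$ is semisimple iff every graded piece is. I expect this last equivalence to be the hardest step: it amounts to controlling the Jacobson radical across the $\DD$-class filtration, so that semisimplicity of the whole algebra is equivalent to that of each principal factor --- precisely the bookkeeping that the cellular approach sidesteps at the price of demanding an anti-involution.
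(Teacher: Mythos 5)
Your proposal is correct and takes essentially the same route as the paper: the general statement is proved there exactly by the classical reduction you describe, filtering $R^\alpha[S]$ through ideals whose successive quotients are contracted twisted semigroup algebras of the principal factors, recognising these as Munn rings over $R^\alpha[G_D]$ with sandwich matrix $P_D^\alpha$, and invoking Munn's Theorem 4.7 for the radical bookkeeping you rightly flag as the hard step. Your cellular argument in the anti-involution case reproduces, step for step, the paper's separate proof of the special case (Theorem \ref{specialcase}) via Theorem \ref{gussemis} and Corollary \ref{Phi}, including the observation that $*$ makes $P_D^\alpha$ automatically square.
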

This result is exactly analogous to the well known non-twisted version \cite{Munn}. Indeed it is easy to check that if 
$S_1\subseteq S_2$ are ideals of $S$ such that $S_2\setminus S_1$ is a single $\DD$ class $D$, then the quotient
\[
  R^\alpha[S_2]/R^\alpha[S_1]\cong R^\alpha_0[S_2/S_1]
\]
is a Munn ring over the ring $R^\alpha[G_D]$, with sandwich matrix $P_D^\alpha$; here the notation $R^\alpha_0[S_2/S_1]$ 
denotes the contracted twisted semigroup algebra, defined analogously to a contracted semigroup algebra. The above 
theorem then follows from Theorem 4.7 of \cite{Munn} (see also \cite{Pon}).

If the assumptions of Corollary \ref{mainunits} hold, the resulting cellular structure is sufficient by itself to quickly obtain 
the above theorem from general cellular algebra results, as we see below. Note that setting $v_{L^*}=u_L^*$, the definition 
of $P_D^\alpha$ given before Lemma \ref{phi} agrees with that in the above theorem.
\begin{thm}\label{specialcase}
Suppose that the conditions of Corollary \ref{mainunits} hold, and that $R$ is a field. Then $R^\alpha[S]$ is semisimple exactly 
when
\begin{enumerate}
\item $R^\alpha[G_D]$ is semisimple and
\item $P_D^\alpha$ is invertible,
\end{enumerate}
for each $D\in\D$, where $P_D^\alpha$ is as defined immediately before Lemma \ref{phi}.
\end{thm}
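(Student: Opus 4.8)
The plan is to derive everything from the semisimplicity criterion Theorem \ref{gussemis}, applied both to $R^\alpha[S]$ and to the group algebras $R^\alpha[G_D]$, together with the determinant formula of Corollary \ref{Phi}. By Corollary \ref{mainunits}, $R^\alpha[S]$ is cellular with cell datum $(\Lambda,M,C,*)$, so Theorem \ref{gussemis} says that $R^\alpha[S]$ is semisimple if and only if $\det\Phi^{(D,\lambda)}\neq 0$ for every $(D,\lambda)\in\Lambda$. Corollary \ref{Phi} factorises each such determinant as
\[
  \det\Phi^{(D,\lambda)}=\left(\det\Phi^\lambda\right)^{|\Lcl_D|}\det\rho^\lambda(P_D^\alpha),
\]
and since $R$ is a field this product vanishes exactly when one of its factors does. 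Hence $R^\alpha[S]$ is semisimple if and only if, for every $\DD$ class $D$ and every $\lambda\in\Lambda_D$, we have both $\det\Phi^\lambda\neq 0$ and $\det\rho^\lambda(P_D^\alpha)\neq 0$.

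Next I would identify the two resulting families of conditions. Applying Theorem \ref{gussemis} this time to the cellular algebra $R^\alpha[G_D]$ (cellular by hypothesis, with the $\Phi^\lambda$ its Gram matrices), the statement that $\det\Phi^\lambda\neq 0$ for all $\lambda\in\Lambda_D$ is exactly condition (i), that $R^\alpha[G_D]$ is semisimple. It then remains to prove that, \emph{under the hypothesis that} (i) holds for a fixed $D$, the requirement $\det\rho^\lambda(P_D^\alpha)\neq 0$ for all $\lambda\in\Lambda_D$ is equivalent to condition (ii), that $P_D^\alpha$ is invertible. Granting this equivalence, the bookkeeping closes in both directions: if (i) and (ii) hold for every $D$ then all the factors above are nonzero and $R^\alpha[S]$ is semisimple; conversely, semisimplicity forces every $\det\Phi^\lambda\neq 0$, giving (i) for each $D$, and then the equivalence upgrades the condition $\det\rho^\lambda(P_D^\alpha)\neq 0$ for all $\lambda$ to (ii).

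The crux is therefore the claim that, when $R^\alpha[G_D]$ is semisimple, the matrix $P_D^\alpha\in\mat_{\Lcl_D}(R^\alpha[G_D])$ is invertible precisely when $\det\rho^\lambda(P_D^\alpha)\neq 0$ for each $\lambda\in\Lambda_D$. I would prove this by showing that the combined map $\rho=\bigoplus_{\lambda\in\Lambda_D}\rho^\lambda$ is an $R$-algebra isomorphism
\[
  R^\alpha[G_D]\xrightarrow{\ \sim\ }\bigoplus_{\lambda\in\Lambda_D}\mat_{M_D(\lambda)}(R).
\]
By the theory of \cite{gus} (cf. Theorem \ref{gussemis}), in the semisimple case the nonzero cell modules $W(\lambda)$ form a complete set of pairwise inequivalent irreducibles. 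The essential point is that each endomorphism ring $\End_{R^\alpha[G_D]}(W(\lambda))$ is $R$ itself: writing the Wedderburn dimension of $R^\alpha[G_D]$ as $\sum_\lambda |M_D(\lambda)|^2/\dim_R\End(W(\lambda))$ and comparing it with the cellular dimension $\dim_R R^\alpha[G_D]=\sum_\lambda|M_D(\lambda)|^2$ (the basis being indexed by $(\lambda,s,t)$) forces every $\dim_R\End(W(\lambda))=1$. Thus each block map $\rho^\lambda$ surjects onto the full matrix algebra $\mat_{M_D(\lambda)}(R)$, and a dimension count makes $\rho$ an isomorphism. Inducing $\rho$ up to matrices gives
\[
  \mat_{\Lcl_D}(R^\alpha[G_D])\xrightarrow{\ \sim\ }\bigoplus_{\lambda}\mat_{M(D,\lambda)}(R),
\]
carrying $P_D^\alpha$ to $(\rho^\lambda(P_D^\alpha))_\lambda$, and an element of this product of matrix algebras over a field is a unit if and only if every block is a unit, i.e. has nonzero determinant. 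The main obstacle is exactly this absolute-irreducibility step: one must verify that semisimplicity of the cellular algebra $R^\alpha[G_D]$ forces its cell modules to be split, so that the block representations $\rho^\lambda$ land in full matrix algebras over $R$ and invertibility of $P_D^\alpha$ can be detected blockwise by ordinary determinants.
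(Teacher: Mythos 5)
Your proposal is correct and follows essentially the same route as the paper: apply Theorem \ref{gussemis} to $R^\alpha[S]$ and to each $R^\alpha[G_D]$, factor $\det\Phi^{(D,\lambda)}$ via Corollary \ref{Phi}, and transfer invertibility of $P_D^\alpha$ through the isomorphism $\bigoplus_{\lambda}\rho^\lambda$ onto a sum of full matrix algebras. The only difference is that you justify the splitting of the cell modules (so that each $\rho^\lambda$ lands in a full matrix algebra over $R$) by an explicit Wedderburn dimension count, whereas the paper simply attributes this to the Graham--Lehrer theory; your extra argument is sound.
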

\begin{proof}
Suppose that the two conditions hold, and consider any $(D,\lambda)\in\Lambda$. Since $P_D^\alpha$ is invertible, 
certainly $\rho^\lambda(P_D^\alpha)$ is invertible. Thus $\det\rho^\lambda(P_D^\alpha)\neq0$. Also because 
$R^\alpha[G_D]$ is semisimple, by Theorem \ref{gussemis} we have $\det\Phi^\lambda\neq0$. Hence Corollary \ref{Phi} gives
\[
  \det\Phi^{(D,\lambda)}\neq0.
\]
As this holds for each $(D,\lambda)\in\Lambda$, the algebra $R^\alpha[S]$ is semisimple by Theorem \ref{gussemis}.

Conversely suppose that $R^\alpha[S]$ is semisimple, so that $\det\Phi^{(D,\lambda)}\neq0$ for each $(D,\lambda)\in\Lambda$ 
by Theorem \ref{gussemis}. By Corollary \ref{Phi}, we then have
\[
  \det\Phi^\lambda\neq0\hspace{20mm}\text{and}\hspace{20mm}\det\rho^\lambda(P_D^\alpha)\neq0.
\]
Now the former holds for all $\lambda\in\Lambda_D$. Thus applying Theorem \ref{gussemis}, statement (i) implies that 
$R^\alpha[G_D]$ is semisimple, and moreover statement (iii) implies that the map
\[
  \bigoplus_{\lambda\in\Lambda_D}\rho^\lambda:R^\alpha[G_D]
    \rightarrow\bigoplus_{\lambda\in\Lambda_D}\mat_{M(\lambda)}(R)
\]
is an isomorphism. Because $\det\rho^\lambda(P_D^\alpha)\neq0$, the matrix $\rho^\lambda(P_D^\alpha)$ is invertible for 
each $\lambda\in\Lambda_D$. Thus
\[
  \bigoplus_{\lambda\in\Lambda_D}\rho^\lambda(P_D^\alpha)\in\bigoplus_{\lambda\in\Lambda_D}\mat_{M(D,\lambda)}(R)
\]
is invertible. The above isomorphism then implies that the matrix $P_D^\alpha$ is invertible. Thus both conditions hold, 
verifying the reverse direction and completing the proof of Theorem \ref{specialcase}.
\end{proof}
\section{The Partition Algebra}
Fix an integer $n\geq1$. For convenience, we denote
\begin{eqnarray*}
  I&=&\{1,2,3,\ldots,n\},\\
  I'&=&\{1',2',3',\ldots,n'\},\\
  I''&=&\{1'',2'',3'',\ldots,n''\}.
\end{eqnarray*}
Let $A_n$ denote the set of equivalence relations on the set $I\cup I'$. For $x\in A_n$, let $\tilde x$ denote the set of 
equivalence classes of $x$. We define a binary operation on $A_n$ as follows. Consider two elements $x,y\in A_n$. Let $y'$ 
denote the equivalence relation on the set $I'\cup I''$ which is obtained from $y$ by 
appending a $'$ to each number. Let $\langle x\cup y'\rangle$ denote the equivalence relation on the set $I\cup I'\cup I''$ 
which is generated by $x$ and $y'$. Let $m(x,y)$ denote the number of equivalence classes of $\langle x\cup y'\rangle$ which 
contain only single dashed elements, that is which are contained in $I'$. Remove all the single dashed elements from 
$\langle x\cup y'\rangle$ and replace the double dashes with single dashes to obtain $xy$. That is, $xy$ is obtained from
\[
  \{(i,j)\in\langle x\cup y'\rangle\mid i,j\in I\cup I''\}
\]
by replacing $i''$ with $i'$. For example, consider the elements $x,y\in A_7$ whose equivalence classes are
\begin{eqnarray*}
  \tilde x&=&\{\{1,3,4',6'\},\{2\},\{4,5,6\},\{7\},\{1'\},\{2',3'\},\{5',7'\}\},\\
  \tilde y&=&\{\{1\},\{2,4\},\{3,3',4',6'\},\{5,7\},\{6,5',7'\},\{1'\},\{2'\}\}.
\end{eqnarray*}
Then
\begin{eqnarray*}
  \widetilde{y'}&=&\{\{1'\},\{2',4'\},\{3',3'',4'',6''\},\{5',7'\},\{6',5'',7''\},\{1''\},\{2''\}\},\\
  \widetilde{\langle x\cup y'\rangle}
    &=&\{\{1,3,2',3',4',6',3'',4'',5'',6'',7''\},\{2\},\{4,5,6\},\\
    &&\hspace{10mm}\{7\},\{1'\},\{5',7'\},\{1''\},\{2''\}\},\\
  \widetilde{xy}&=&\{\{1,3,3',4',5',6',7'\},\{2\},\{4,5,6\},\{7\},\{1'\},\{2'\}\}.
\end{eqnarray*}
Also $m(x,y)=2$ since $\langle x\cup y'\rangle$ has two equivalence classes contained in $I'$, namely $\{1'\}$ and $\{5',7'\}$. This 
operation has a natural diagrammatic interpretation described in \cite{Martin}. It is associative, and we have the relation
\[
  m(x,y)+m(xy,z)=m(x,yz)+m(y,z)
\]
for any $x,y,z\in A_n$. The latter implies that for any $\delta$ in a commutative ring $R$, we can define a twisting from 
$A_n$ into $R$ by
\[
  \alpha(x,y)=\delta^{m(x,y)}.
\]
The resulting twisted semigroup algebra $R^\alpha[A_n]$ is called the \emph{partition algebra} \cite{Martin}. This algebra was 
shown to be cellular by Xi in \cite{xi}. We reproduce this result here with the aid of Theorem \ref{main}.

We first note that $A_n$ has a natural anti-involution $*$ which swaps $i$ and $i'$, for each $i\in I$. It is easy to see that $\alpha$ 
and $*$ satisfy Assumption \ref{twiststar}. Green's relations in $A_n$ are described by the following theorem, the proof of which is 
straightforward and omitted. 
\begin{thm}\label{greenpartition}
For $x\in A_n$, define the functions
\begin{eqnarray*}
  d(x)&=&\#\{J\in\tilde x\mid J\cap I\neq\emptyset\neq J\cap I'\},\\
  r(x)&=&\left(\{J\in\tilde x\mid J\subseteq I\},
    \{J\cap I\mid J\in\tilde x\text{ and }J\cap I\neq\emptyset\neq J\cap I'\}\right),\\
  l(x)&=&\left(\{J\in\tilde x\mid J\subseteq I'\},
    \{J\cap I'\mid J\in\tilde x\text{ and }J\cap I\neq\emptyset\neq J\cap I'\}\right).
\end{eqnarray*}
Then for each $x,y\in A_n$,
\begin{enumerate}
\item $x\;\DD\;y$ exactly when $d(x)=d(y)$.
\item $x\;\RR\;y$ exactly when $r(x)=r(y)$.
\item $x\;\LL\;y$ exactly when $l(x)=l(y)$.
\end{enumerate}
\end{thm}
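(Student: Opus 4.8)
The plan is to establish the characterization of $\RR$ directly by hand, to deduce the $\LL$-statement formally by applying the anti-involution $*$, and to obtain the $\DD$-statement from the other two. Everything rests on one structural observation about the product: in $xy$ the top row $I$ is inherited from $x$ and the bottom row from $y$, and forming $\langle x\cup y'\rangle$ can only merge blocks, never split them. Two monotonicity consequences are all I need. First, a block of $x$ lying entirely in $I$ survives unchanged in any product $xc$, since nothing in $c'$ meets $I$; hence right multiplication can only coarsen the partition that $x$ induces on the top row, and can only enlarge the family of top-only blocks. Dually, left multiplication behaves this way on the bottom row $I'$. I would record these two facts at the outset.

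For $\RR$, recall that $x\;\RR\;y$ means each of $x,y$ is obtainable from the other by right multiplication, say $x=yc$ and $y=xb$. The forward direction is then pure monotonicity: from $x=yc$ the partition of $I$ given by $x$ coarsens that given by $y$, and $y=xb$ gives the reverse, so the two partitions of $I$ agree; likewise the two families of top-only blocks satisfy mutual containment and hence coincide. Since both the partition of $I$ and its subfamily of top-only parts agree, the remaining (through-)parts $\{J\cap I\}$ agree as well, which is exactly $r(x)=r(y)$.

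The converse is the main obstacle, since it requires building the connecting diagrams. Given $r(x)=r(y)$ there is a canonical bijection between the through-blocks of $x$ and of $y$ matching equal top-parts, and the top-only blocks coincide. I would construct $c$ with $yc=x$ as follows: route each through-strand of $y$, leaving $y$ at its bottom-part $U\subseteq I'$, to the bottom-part $V$ of the matching through-block of $x$ by a single block $\hat U\cup V$ of $c$; absorb each bottom-only block of $y$ into a top-only block of $c$, so it vanishes into the glued middle row; and install each bottom-only block of $x$ as a bottom-only block of $c$. One then checks, block by block, that $yc$ has precisely the top-only blocks, through-blocks and bottom-only blocks of $x$; the genuine work is this bookkeeping, especially for the blocks that disappear into the middle. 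A symmetric construction gives $b$ with $xb=y$, whence $x\;\RR\;y$.

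The $\LL$-statement follows with no further combinatorics: $*$ interchanges $I$ and $I'$, so $r(x^*)$ is $l(x)$ with the dashes swapped, and $*$ exchanges $\RR$ and $\LL$; applying the proven $\RR$-statement to $x^*,y^*$ gives $x\;\LL\;y$ iff $l(x)=l(y)$. For $\DD$, I would use only $\DD=\langle\RR\cup\LL\rangle$. Since $r$ and $l$ each determine the number $d$ of through-blocks, $d$ is constant on $\RR$-classes and on $\LL$-classes, hence on the generated equivalence $\DD$; this is the forward direction. Conversely, given $d(x)=d(y)$ I construct $z$ carrying the top pattern of $x$ and the bottom pattern of $y$, joining their $d$ through-parts by $d$ through-strands via any bijection (possible exactly because $d(x)=d(y)$); then $r(z)=r(x)$ and $l(z)=l(y)$ give $x\;\RR\;z\;\LL\;y$, so $x\;\DD\;y$.
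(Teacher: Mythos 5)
Your proof is correct. There is nothing in the paper to compare it with: the author explicitly declares the proof of this theorem ``straightforward'' and omits it, so you have supplied the missing argument, and it is the standard one. The two monotonicity facts you isolate are exactly right --- a block of $x$ contained in $I$ meets no generator of $c'$, hence persists as a block of $\langle x\cup c'\rangle$ and therefore of $xc$, while the partition induced on $I$ can only coarsen --- and they give the forward direction of (ii) cleanly, since the top-only families are then mutually contained and the through-traces are their complement inside the common induced partition of $I$. Your construction of $c$ with $yc=x$ does close under the block-by-block check: the glued block $T\cup U\cup V''$ (through-block $T\cup U$ of $y$ joined to the block $U\cup V''$ of $c'$) absorbs nothing further because $U$ is an entire block on both sides of the interface, and the capped blocks $W$ stay inside $I'$ and vanish on restriction to $I\cup I''$; a symmetric construction gives $b$. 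The reduction of (iii) to (ii) via the anti-involution $*$ (which exchanges $\RR$ and $\LL$ and carries $r$ to $l$), and of (i) to (ii) and (iii) via a connecting element $z$ with $r(z)=r(x)$ and $l(z)=l(y)$, are both valid; the latter uses only $\DD=\langle\RR\cup\LL\rangle$ together with the constancy of $d$ on $\RR$- and $\LL$-classes, as you say. One point to make explicit when writing this up: you read $x\;\RR\;y$ as mutual divisibility on the right ($x=yc$ and $y=xb$); this is the convention under which Green's Lemma is stated and the only one under which (ii) and (iii) hold as written, but it does not obviously match the literal wording of the paper's definition of $\leq_\RR$, so you should fix the convention at the outset.
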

We note that $r(x)$ and $l(x)$ correspond to elements of the set $S_n(k)$ of \cite{Martin}, where $k=d(x)$. Now $m(x,y)$ 
depends only on the first components of $l(x)$ and $r(y)$. If $y\;\RR\;z$ then $r(y)=r(z)$ by Theorem \ref{greenpartition}, so 
that $\alpha(x,y)=\alpha(x,z)$. Consider a $\DD$ class $D$ in $A_n$. Theorem \ref{greenpartition} implies that $D=d^{-1}(n-k)$ 
for some integer $k$ with $0\leq k\leq n$. Let $1_D$ denote the element of $A_n$ whose equivalence classes are
\[
  \tilde 1_D=\{\{1,2,\ldots,k\},\{1',2',\ldots,k'\}\}\cup\{\{i,i'\}\mid k<i\leq n\}.
\]
It is clear that $1_D\in D$ is an idempotent invariant under $*$. Moreover $x\in G_D$ exactly when
\[
  r(x)=r(1_D)=\left(\{\{1,2,\ldots,k\}\},\{\{i\}\mid k<i\leq n\}\right)
\]
and
\[
  l(x)=l(1_D)=\left(\{\{1',2',\ldots,k'\}\},\{\{i'\}\mid k<i\leq n\}\right).
\]
Thus $x$ differs from $1_D$ only by how the elements $k+1,k+2,\ldots,n$ are paired with the elements $(k+1)',(k+2)',\ldots,n'$. It then 
follows quickly from the multiplication in $A_n$ that there is a group isomorphism $\theta_D$ from the symmetric group $S_{n-k}$ to $G_D$ 
such that
\[
  \widetilde{\theta_D(\sigma)}=\{\{1,2,\ldots,k\},\{1',2',\ldots,k'\}\}
    \cup\{\{k+\sigma(i),(k+i)'\}\mid 1\leq i\leq n-k\}.
\]
Moreover $*$ corresponds under $\theta_D$ to inversion in $S_{n-k}$. From example (1.2) of \cite{gus}, we know that $R[S_{n-k}]$ is cellular 
with the anti-involution induced by inversion. Therefore $R[G_D]$ is cellular with anti-involution $*$. The assumptions of Corollary \ref{mainconst} 
are then satisfied, so the partition algebra $R^\alpha[A_n]$ is cellular.
\section{The Brauer and Temperley-Lieb Algebras}
Suppose Corollary \ref{mainconst} applies to $R^\alpha[S]$, and we wish to apply it to $R^\alpha[T]$, where $T$ is a subsemigroup of $S$ fixed 
setwise by the involution $*$. Restricting $\alpha$ and $*$ to $T$, Assumptions \ref{assum1} and \ref{twiststar} clearly still hold. Moreover if 
$y$ and $z$ are $\RR$ related in $T$, they are certainly $\RR$ related in $S$, so $\alpha(x,y)=\alpha(x,z)$ for $x\in T$. It therefore suffices to 
check Assumption \ref{1D} and that the relevant group algebras are cellular with anti-involution $*$.

Let $BR_n$ denote the set of elements of $A_n$ whose equivalence classes each contain $2$ elements. Thus $BR_n$ essentially 
consists of all partitions of the set $I\cup I'$ into pairs. We represent elements of $BR_n$ as diagrams by arranging $2n$ dots in the plane 
and labelling them as shown below, and joining the pairs with arcs.
\[
  \includegraphics[height=20mm]{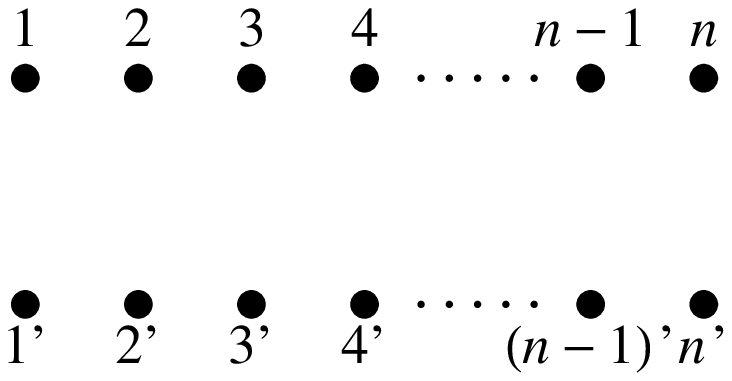}
\]
For example, the element
\[
  x=\{\{1,3\},\{2,5'\},\{4,1'\},\{5,3'\},\{2',4'\}\}
\]
of $BR_5$ is represented by the diagram
\[
  x=\begin{array}[c]{c}\includegraphics[height=30mm]{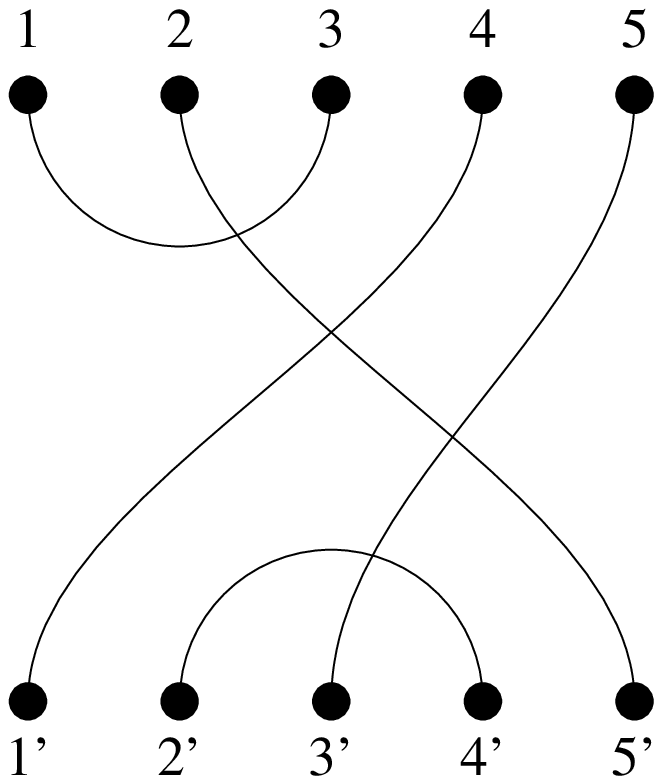}\end{array}.
\]
In fact $BR_n$ forms a subsemigroup of $A_n$, called the \emph{Brauer semigroup} \cite{brauersemi,brauer4}. The twisted semigroup algebra $R^\alpha[BR_n]$ 
is called the \emph{Brauer algebra}. This algebra has been studied extensively in the literature; for example, see \cite{brauer1,brauer2,brauer3}. It was 
realized as a twisted semigroup algebra as above in \cite{brauer4}. The Green's relations in $BR_n$ are described by the following result, given in 
Theorem 7 of \cite{brauersemi}.
\begin{thm}\label{green}
For $x\in BR_n$, define the functions
\begin{eqnarray*}
  r(x)&=&\{\{i,j\}\in x\mid1\leq i,j\leq n\},\\
  l(x)&=&\{\{i',j'\}\in x\mid 1\leq i,j\leq n\},\\
  d(x)&=&\#\{\{i,j'\}\in x\mid1\leq i,j\leq n\}.
\end{eqnarray*}
Note that
\[
  d(x)=n-2|r(x)|=n-2|l(x)|\in\{n,n-2,n-4,\ldots\}.
\]
Suppose $x,y\in BR_n$. Then
\begin{enumerate}
\item $x\;\DD\;y$ exactly when $d(x)=d(y)$.
\item $x\;\RR\;y$ exactly when $r(x)=r(y)$.
\item $x\;\LL\;y$ exactly when $l(x)=l(y)$.
\end{enumerate}
\end{thm}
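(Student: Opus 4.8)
The plan is to read everything off the diagram picture, in which an element $x\in BR_n$ is a perfect matching of the $2n$ dots labelled by $I\cup I'$, with $I$ drawn on top and $I'$ on the bottom. Since every dot has degree one, each dot of $I$ lies either in a top arc (a pair inside $I$, counted by $r(x)$) or in a through strand (a pair $\{i,j'\}$, counted by $d(x)$); counting the $n$ dots of $I$ gives $d(x)=n-2|r(x)|$, and dually $d(x)=n-2|l(x)|$, which is the asserted Note. Throughout I would use that Green's relations live in the semigroup $BR_n$, whose product is just the underlying matching $xy$; the loop count $m(x,y)$ only records a power of $\delta$ in the algebra and is irrelevant here, so I may freely discard closed middle loops.

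First I would treat $\RR$. The product $xz$ is formed by stacking $z$ beneath $x$ and reading the resulting connections from $I$ down to the new bottom row, so right multiplication modifies only the lower row of a diagram and leaves its top arcs intact; a top arc of $xz$ that is not already a top arc of $x$ can arise only when two through strands of $x$ are joined below by $z$. Hence $r(x)\subseteq r(xz)$ for every $z$, and $|r|$ is non-decreasing under right multiplication. If $x\;\RR\;y$ then each of $x,y$ lies in the right ideal generated by the other, so $x=yz$ and $y=xw$ for suitable $z,w$, forcing $r(y)\subseteq r(x)$ and $r(x)\subseteq r(y)$, whence $r(x)=r(y)$; this is the easy direction. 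For the converse I would show that right multiplication acts transitively on the set of diagrams with a fixed value of $r$: given $r(x)=r(y)$, the same dots of $I$ are occupied by through strands in $x$ and in $y$, so one constructs an explicit diagram $z$ whose top arcs cap off and whose through strands reroute the bottom of $x$ into exactly the bottom of $y$, giving $xz=y$, and symmetrically a $w$ with $yw=x$. Thus $r(x)=r(y)$ implies $x\;\RR\;y$.

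The statement for $\LL$ then follows by symmetry from the anti-involution $*$, which interchanges $I$ and $I'$ and therefore swaps $r$ with $l$ and left multiplication with right: since $x\;\RR\;y$ if and only if $x^*\;\LL\;y^*$, and $l(x)=r(x^*)$, the $\LL$ claim is just the $\RR$ claim applied to $x^*,y^*$. Finally, for $\DD$ I would note that $d=n-2|r|=n-2|l|$ is constant on every $\RR$ class and on every $\LL$ class, hence constant on every $\DD$ class because $\DD=\langle\RR\cup\LL\rangle$; this gives $x\;\DD\;y\Rightarrow d(x)=d(y)$. For the converse, $d(x)=d(y)$ yields $|r(x)|=|l(y)|$, so I can build an intermediate diagram $z$ with top arcs $r(z)=r(x)$ and bottom arcs $l(z)=l(y)$, joining the remaining dots by through strands; then $x\;\RR\;z$ and $z\;\LL\;y$, so $x\;\DD\;y$.

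The main obstacle is the sufficiency half of the $\RR$ statement, namely the explicit construction of the rerouting diagram $z$ (and its partner $w$) realizing $xz=y$: one must verify that capping the through-strand endpoints of $x$ in the correct pattern and permuting the free bottom dots really does reproduce $y$ after concatenation and the discarding of closed loops. Every remaining step is either a one-line counting argument or a formal application of the anti-involution $*$.
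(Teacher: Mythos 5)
The paper contains no proof of this theorem to compare against: it is quoted directly from Theorem~7 of \cite{brauersemi}, just as the analogous statement for the partition algebra (Theorem~\ref{greenpartition}) is stated with its proof declared ``straightforward and omitted.'' Judged on its own, your diagrammatic argument is correct and is the standard one: the counting identity, the monotonicity of $r$ under right multiplication (giving the easy half of (ii)), the reduction of (iii) to (ii) via the anti-involution $*$, and the factorisation of (i) through an intermediate diagram $z$ with $r(z)=r(x)$ and $l(z)=l(y)$ (which exists precisely because $d(x)=d(y)$ forces $|r(x)|=|l(y)|$) are all sound. The one step you single out as the main obstacle --- exhibiting $z$ with $xz=y$ when $r(x)=r(y)$ --- does not require an ad hoc rerouting diagram: take $z=x^*y$. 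Indeed $xx^*$ has top arcs $r(x)$, bottom arcs the mirror image of $r(x)$, and identity through strands, so when $r(x)=r(y)$ the bottom arcs of $xx^*$ cap the top arcs of $y$ into closed loops (discarded in the semigroup) and $(xx^*)y=y$; symmetrically $y(y^*x)=x$, so $x\;\RR\;y$. The same trick handles the transitivity-of-right-multiplication claim wholesale. One small caution: your proof uses the convention that $x\;\RR\;y$ means $x$ and $y$ generate the same right ideal; this is the convention consistent with Green's Lemma as stated in the paper and with everything done afterwards, but it is the opposite of the literal reading of the paper's introductory sentence defining $\leq_\RR$ and $\leq_\LL$ (where ``left'' and ``right'' appear to be interchanged), so it is worth stating explicitly which order you mean.
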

Now the $\DD$ class $D=d^{-1}(n-2k)$ contains the following idempotent.
\begin{eqnarray*}
  1_D
    &=&\{\{2i-1,2i\}\mid1\leq i\leq k\}\cup\{\{(2i-1)',(2i)'\}\mid 1\leq i\leq k\}\\
    &&{}\cup\{\{i,i'\}\mid2k+1\leq i\leq n\}\\ [10pt]
    &=&\begin{array}[c]{c}\includegraphics[height=20mm]{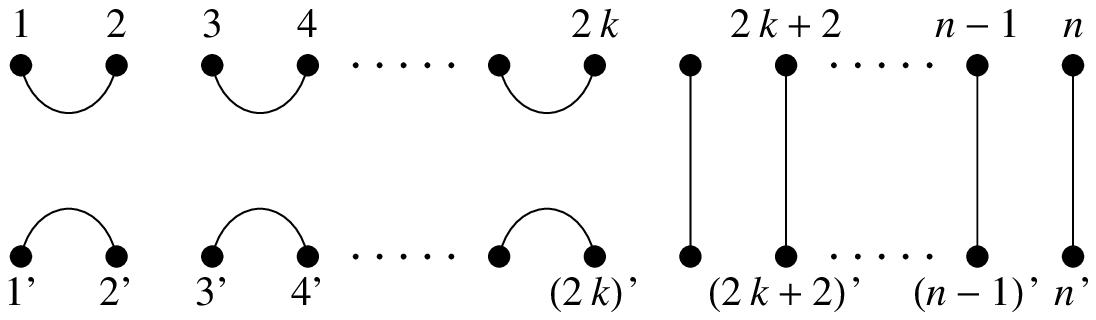}\end{array}.
\end{eqnarray*}
As in the previous section, $1_D$ is fixed by $*$ and its $\HH$ class is isomorphic to the symmetric group $S_{n-2k}$, with $*$ corresponding to 
inversion. By the above discussion, it follows that Corollary \ref{mainconst} applies to the Brauer algebra.

To determine the resulting cell datum, we must choose appropriate elements $u_L$ for each $L\in\Lcl_D$. By Theorem 
\ref{green}, each $L$ is determined uniquely by $l(L)$. Now $l(L)$ consists of $k$ disjoint pairs of elements of the 
set $\{1',2',\ldots,n'\}$. Suppose that the remaining $n-2k$ elements are $\{j_1',j_2',\ldots,j_{n-2k}'\}$, where
\[
  j_1<j_2<\ldots<j_{n-2k}.
\]
Let
\[
  u_L
    =\{\{2i-1,2i\}\mid1\leq i\leq k\}\cup l(L)
      \cup\{\{2k+i,j_i'\}\mid1\leq i\leq n-2k\}.
\]
Diagrammatically, $l(L)$ determines the $k$ edges which have both vertices on the bottom row, while $u_L\;\RR\;1_D$ implies 
that $u_L$ must contain the $k$ edges
\[
  r(1_D)=\{\{2i-1,2i\}\mid1\leq i\leq k\}
\]
which have both vertices on the top row. The last $n-2k$ dots on the top row are joined to the remaining $n-2k$ dots on 
the bottom row in the natural way. For example, suppose that $n=6$ and $k=2$, and consider the $\LL$ class $L$ such that 
$l(L)$ is represented by
\[
  \includegraphics[height=20mm]{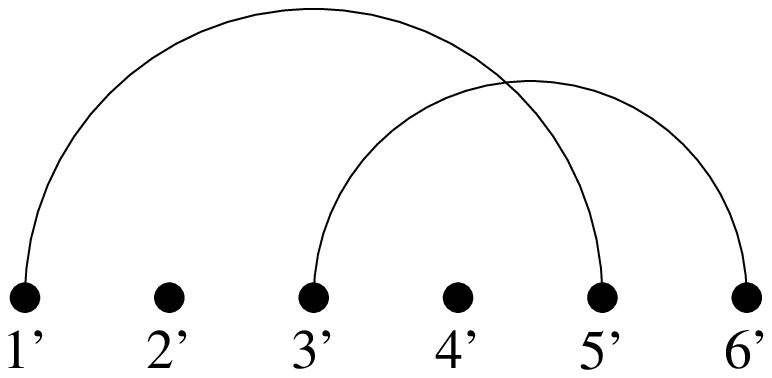}
\]
Then
\[
  u_L=\begin{array}[c]{c}\includegraphics[height=30mm]{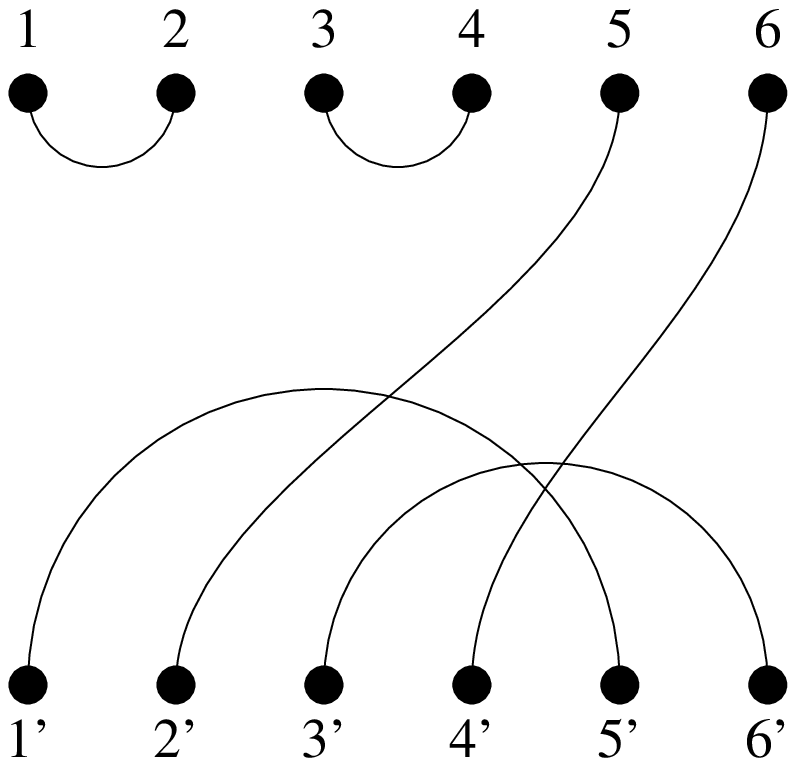}\end{array}.
\]
Having thus defined $u_L$, the cell datum produced by Corollary \ref{mainconst} is exactly that given in \cite{gus}.

The \emph{Temperley-Lieb semigroup} $TL_n$ is the subsemigroup of $BR_n$ consisting of the diagrams that can be 
drawn without intersecting curves. For example, an element of $TL_8$ is shown below.
\[
  \includegraphics[height=35mm]{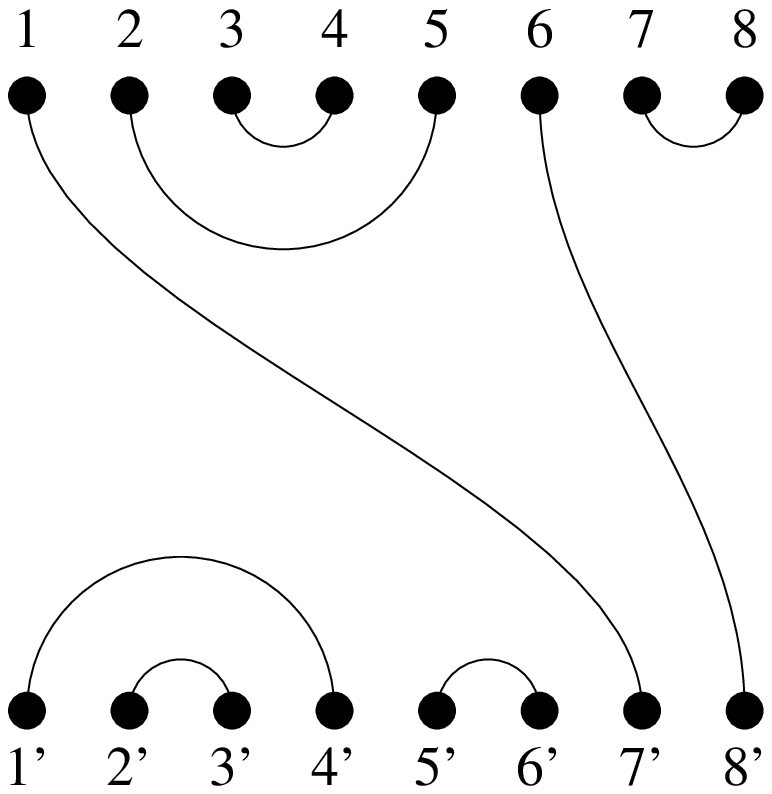}
\]
The twisted semigroup algebra $R^\alpha[TL_n]$ is called the \emph{Temperley-Lieb algebra} \cite{TL1,TL2}. Corollary 
\ref{mainconst} applies to this algebra in the same way. Indeed the $\DD$ classes of $TL_n$ correspond to those of $BR_n$, 
and the idempotents in $BR_n$ constructed above are contained in $TL_n$. The maximal groups are trivial in this case, so 
the group algebras are trivially cellular. Moreover choosing $u_L$ as above, the cell datum produced by Corollary 
\ref{mainconst} is again the same as in \cite{gus}.

The cyclotomic Brauer \cite{cycbrauer} and Temperley-Lieb \cite{cycTL} algebras are variations on the Brauer and Temperley-Lieb 
algebras which depend on an additional positive integer parameter $m$. They were shown to be cellular in \cite{cycbrauercell} and 
\cite{cycTL} respectively, provided the polynomial $x^m-1$ can be decomposed into linear factors over the ground ring $R$. Again 
we can reproduce these results using Corollary \ref{mainconst}. Indeed when realising these algebras as twisted semigroup 
algebras, the underlying semigroups of diagrams have $\DD$ classes corresponding to those in $BR_n$, and idempotents can be chosen 
analogous to those above. In the case of the cyclotomic Brauer algebra, the maximal subgroups are wreath products $\Z_m\wr S_k$, 
the group algebra of which is cellular (with the appropriate anti-involution) by Theorem (5.5) of \cite{gus}. In 
the case of the cyclotomic Temperley-Lieb algebra, the maximal subgroups are direct sums of copies of $\Z_m$, the group algebra of 
which is easily shown to be cellular.
\section{Acknowledgements}
Thanks to James East for helpful discussions and suggestions. Thanks also to my supervisor, David Easdown.
\bibliographystyle{plain}

\begin{thebibliography}{10}

\bibitem{brauer1}
Richard Brauer.
\newblock On algebras which are connected with the semisimple continuous
  groups.
\newblock {\em Ann. of Math. (2)}, 38(4):857--872, 1937.

\bibitem{twisted}
W.~Edwin Clark.
\newblock Twisted matrix units semigroup algebras.
\newblock {\em Duke Math. J.}, 34:417--423, 1967.

\bibitem{james}
James East.
\newblock Cellular algebras and inverse semigroups.
\newblock {\em Journal of Algebra}.
\newblock (To appear).

\bibitem{TL1}
Frederick~M. Goodman and Hans Wenzl.
\newblock The {T}emperley-{L}ieb algebra at roots of unity.
\newblock {\em Pacific J. Math.}, 161(2):307--334, 1993.

\bibitem{gus}
J.~J. Graham and G.~I. Lehrer.
\newblock Cellular algebras.
\newblock {\em Invent. Math.}, 123(1):1--34, 1996.

\bibitem{green}
J.~A. Green.
\newblock On the structure of semigroups.
\newblock {\em Ann. of Math. (2)}, 54:163--172, 1951.

\bibitem{brauer2}
Phil Hanlon and David Wales.
\newblock A tower construction for the radical in {B}rauer's centralizer
  algebras.
\newblock {\em J. Algebra}, 164(3):773--830, 1994.

\bibitem{cycbrauer}
Reinhard H{\"a}ring-Oldenburg.
\newblock Cyclotomic {B}irman-{M}urakami-{W}enzl algebras.
\newblock {\em J. Pure Appl. Algebra}, 161(1-2):113--144, 2001.

\bibitem{TL2}
Vaughan F.~R. Jones.
\newblock {\em Subfactors and knots}, volume~80 of {\em CBMS Regional
  Conference Series in Mathematics}.
\newblock Published for the Conference Board of the Mathematical Sciences,
  Washington, DC, 1991.

\bibitem{brauer4}
S.~V. Kerov.
\newblock Realizations of representations of the {B}rauer semigroup.
\newblock {\em Zap. Nauchn. Sem. Leningrad. Otdel. Mat. Inst. Steklov. (LOMI)},
  164(Differentsialnaya Geom. Gruppy Li i Mekh. IX):188--193, 199, 1987.

\bibitem{Martin}
Paul Martin.
\newblock Temperley-{L}ieb algebras for nonplanar statistical mechanics---the
  partition algebra construction.
\newblock {\em J. Knot Theory Ramifications}, 3(1):51--82, 1994.

\bibitem{brauersemi}
Volodymyr Mazorchuk.
\newblock On the structure of {B}rauer semigroup and its partial analogue.
\newblock {\em Problems in Algebra}, 13:29--45, 1998.

\bibitem{McAlister}
D.~B. McAlister.
\newblock The category of representations of a completely {$0$}-simple
  semigroup.
\newblock {\em J. Austral. Math. Soc.}, 12:193--210, 1971.

\bibitem{Munn}
W.~D. Munn.
\newblock On semigroup algebras.
\newblock {\em Proc. Cambridge Philos. Soc.}, 51:1--15, 1955.

\bibitem{passman}
Donald~S. Passman.
\newblock {\em The algebraic structure of group rings}.
\newblock Wiley-Interscience [John Wiley \& Sons], New York, 1977.
\newblock Pure and Applied Mathematics.

\bibitem{Pon}
I.~S. Ponizovski{\u\i}.
\newblock On matrix representations of associative systems.
\newblock {\em Mat. Sb. N.S.}, 38(80):241--260, 1956.

\bibitem{cycTL}
Hebing Rui and Changchang Xi.
\newblock The representation theory of cyclotomic {T}emperley-{L}ieb algebras.
\newblock {\em Comment. Math. Helv.}, 79(2):427--450, 2004.

\bibitem{cycbrauercell}
Hebing Rui and Weihua Yu.
\newblock On the semi-simplicity of the cyclotomic {B}rauer algebras.
\newblock {\em J. Algebra}, 277(1):187--221, 2004.

\bibitem{brauer3}
Hans Wenzl.
\newblock On the structure of {B}rauer's centralizer algebras.
\newblock {\em Ann. of Math. (2)}, 128(1):173--193, 1988.

\bibitem{xi}
Changchang Xi.
\newblock Partition algebras are cellular.
\newblock {\em Compositio Math.}, 119(1):99--109, 1999.

\end{thebibliography}

\end{document}